\documentclass[a4paper,11pt]{article}

\usepackage{amsmath,amsthm,amsfonts,amsbsy,amscd,amssymb,graphicx,epsfig,color}

\newtheorem{theorem}{Theorem}[section]
\newtheorem{corollary}{Corollary}

\newtheorem{lemma}[theorem]{Lemma}
\newtheorem{proposition}{Proposition}

\theoremstyle{definition}

\newtheorem{remark}{Remark}

\begin{document}

\title{Convergence of simultaneous distributed-boundary parabolic optimal control problems}

\author{Domingo A.
Tarzia \thanks{Depto. Matem\'atica-CONICET, FCE, Univ. Austral, Paraguay 1950, S2000FZF Rosario, Argentina. E-mail:
DTarzia@austral.edu.ar} \, Carolina M. Bollo \thanks{Depto. Matem\'atica, FCEFQyN, Univ. Nac. de R\'io Cuarto, Ruta 36 Km 601, 5800 R\'io Cuarto, Argentina.
E-mail: cbollo@exa.unrc.edu.ar; cgariboldi@exa.unrc.edu.ar}\,\,  Claudia M. Gariboldi $^\dag$}

\maketitle
\begin{abstract}
We consider a heat conduction problem $S$ with mixed
boundary conditions in a n-dimensional domain $\Omega$ with regular
boundary $\Gamma$ and a family of problems $S_{\alpha}$, where the parameter $\alpha>0$ is
the heat transfer coefficient on the portion of the boundary
$\Gamma_{1}$. In relation to these state systems, we formulate
simultaneous \emph{distributed-boundary} optimal control
problems on the internal energy $g$ and the heat flux $q$ on the complementary portion of the boundary $\Gamma_{2}$. We obtain
existence and uniqueness of the optimal controls, the first order
optimality conditions in terms of the adjoint state and the
convergence of the optimal controls, the system and the
adjoint states when the heat transfer coefficient $\alpha$ goes to
infinity. Finally, we prove estimations between the simultaneous distributed-boundary optimal control and the distributed optimal control
problem studied in a previous paper of the first author.
\end{abstract}

\textbf{2010 Mathematic Subject Classification:} Primary: 49J20;
Secondary: 35K05, 49K20.

\textbf{keywords:} Parabolic variational equalities, Optimal
control, Mixed boundary conditions, Existence and uniqueness,
Optimality conditions.

\section{Introduction}
We consider a bounded domain $\Omega $ in ${\Bbb R}^{n}$, whose
regular boundary $\Gamma $ consists of the union of the two disjoint
portions $\Gamma _{1}$ and $\Gamma _{2}$ with $|\Gamma_{1}|>0$ and
$|\Gamma_{2}|>0$. We denote with $|\Gamma_i|=meas(\Gamma_i)$ (for
$i=1,2$), the $(n-1)$-dimensional Hausdorff measure of the portion
$\Gamma_i$ on $\Gamma$. Let $[0,T]$ a time interval, for a $T>0$. We
present the following heat conduction problems $S$ and $S_{\alpha }$
(for each parameter $\alpha >0)$ respectively, with mixed boundary
conditions (we denote by $u(t)$ to the function $u(\cdot,t)$):
\begin{equation}
\frac{\partial u}{\partial t}-\Delta u=g\,\ \text{ in }\Omega \ \
\,\,\,\,\,\,u\big|_{\Gamma _{1}}=b\,\,\,\,\,\,\,\,-\frac{\partial
u}{\partial n}\bigg|_{\Gamma _{2}}=q\,\,\,\,\,\,\,\,u(0)=v_b
\label{P}
\end{equation}
\begin{equation}
\frac{\partial u}{\partial t}-\Delta u=g\,\ \text{ in }\Omega \ \ \,\,\,\,\,\,-\frac{\partial u}{\partial n%
}\bigg|_{\Gamma _{1}}=\alpha (u-b)\,\,\,\,\,\,\,\,-\frac{\partial
u}{\partial n}\bigg|_{\Gamma _{2}}=q \,\,\,\,\,\,\,\,u(0)=v_b
\label{Palfa}
\end{equation}
where $u$ is the temperature in $\Omega\times (0,T)$, $g$ is the
internal energy in $\Omega $, $b$ is the temperature on $
\Gamma_{1}$ for (\ref{P}) and the temperature of the external
neighborhood of $\Gamma_{1}$ for (\ref{Palfa}), $v_{b}=b$ on
$\Gamma_{1}$, $q$ is the heat flux on $\Gamma_{2}$ and $\alpha
>0$ is the heat transfer coefficient on $\Gamma_{1}$,
which satisfy the hypothesis: $g\in
\mathcal{H}=L^2(0,T;L^2(\Omega))$, $q\in
\mathcal{Q}=L^2(0,T;L^2(\Gamma_2))$, $b\in
H^{\frac{1}{2}}(\Gamma_1)$ and $v_{b}\in H^{1}(\Omega)$.

Let $u$ and $u_{{\alpha}}$ the unique solutions of the parabolic
problems (\ref{P}) and (\ref{Palfa}), whose variational formulations
are given by \cite{Li,MT}:
\begin{equation}\label{Pvariacional}
\left\{
\begin{array}{l l}
u-v_b \in L^2(0,T; V_0), \qquad u(0)=v_b\quad\text{and}\quad \dot{u}\in L^2(0,T; V_0')\\
\text{such that}\quad \langle \dot{u}(t), v\rangle+a(u(t),v)=L(t,v),
\quad \forall v\in V_0,
\end{array}
\right.
\end{equation}
\begin{equation}\label{Palfavariacional}
\left\{
\begin{array}{l l}
u_{\alpha} \in L^2(0,T; V), \qquad u_{\alpha}(0)=v_b\quad\text{and}\quad \dot{u}_{\alpha}\in L^2(0,T; V')\\
\text{such that}\quad \langle \dot{u}_{\alpha}(t),
v\rangle+a_{\alpha}(u_{\alpha}(t),v)=L_{\alpha}(t,v), \quad \forall
v\in V,
\end{array}
\right.
\end{equation}
where $\langle\cdot,\cdot\rangle$ denote the duality between the
functional space ($V$ or $V_{0}$) and its dual space ($V'$ or
$V'_{0}$) and
\[
V=H^{1}(\Omega )\,;\,\,\,\,\,\,\,\,\,V_{0}=\{v\in V:\,v\big|_{\Gamma
_{1}}=0\}\,;\,\,\,\,\,\,\,\,\,Q=L^2(\Gamma_2); \quad
H=L^2(\Omega)\,;
\]
\[
(g,h)_{H}=\int_{\Omega}gh dx\,; \quad (q,\eta)_{Q}=\int_{\Gamma
_{2}}q\eta d\gamma\,;
\]
\[
a(u,v)=\int_{\Omega }\nabla u\nabla vdx\,;\quad a_{\alpha
}(u,v)=a(u,v)+\alpha \int\limits_{\Gamma_1}u vd\gamma\,;
\]
\[
L(t,v)=(g(t),v)_{H}-(q(t),v)_{Q};\quad L_{{\alpha}}(t,v)=\,L
(t,v)+\alpha \int\limits_{\Gamma_1}b vd\gamma,
\]
\[
\exists \lambda_{0}>0\quad \text{such that}\quad a(v,v)\geq \lambda_{0}||v||^{2}_{V},\quad \forall v\in V_{0}.
\]

We consider $\mathcal{H}=L^2(0,T;H)$, with norm
$||.||_{\mathcal{H}}$ and internal product
\[
(g,h)_{\mathcal{H}}=\int\limits_0^T(g(t),h(t))_{H} dt,
\]
and the space $\mathcal{Q}=L^2(0,T;Q)$, with norm
$||.||_{\mathcal{Q}}$ and internal product
\[
(q,\eta)_{\mathcal{Q}}=\int\limits_0^T(q(t),\eta(t))_{Q} dt.
\]

For the sake of simplicity, for a Banach space $X$ and $1\leq
p\leq\infty$, we will often use $L^{p}(X)$ instead of
$L^{p}(0,T;X)$.

If we denote by $u_{gq}$ and $u_{\alpha gq}$ the unique solution of
the problems (\ref{Pvariacional}) and (\ref{Palfavariacional})
respectively, we formulate the following simultaneous
\emph{distributed-boundary} optimal control problems $P$ and $P_{\alpha}$ on the internal
energy $g$ and the heat flux $q$, as a vector control variable, respectively \cite{GT2,Li,Tro}:
\begin{equation}\label{pcd1bi}
\text{find}\quad
(\overline{\overline{g}},\overline{\overline{q}})\in
\mathcal{H}\times\mathcal{Q}\quad\text{such that}\quad
J(\overline{\overline{g}},\overline{\overline{q}})=\min\limits_{g\in
\mathcal{H},q\in \mathcal{Q}}J(g,q)
\end{equation}
\begin{equation}\label{pcd2bi}
\text{find}\quad
(\overline{\overline{g}}_{\alpha},\overline{\overline{q}}_{\alpha})\in
\mathcal{H}\times\mathcal{Q}\quad\text{such that}\quad
J_{\alpha}(\overline{\overline{g}}_{\alpha},\overline{\overline{q}}_{\alpha})=\min\limits_{g\in
\mathcal{H},q\in \mathcal{Q}}J_{\alpha}(g,q),
\end{equation}
where the cost functionals $J$ and $J_{\alpha}$ are given by
\begin{equation}\label{Jbi}
J(g,q)=\frac{1}{2}||u_{gq}-z_d||^2_{\mathcal{H}}+\frac{M_1}{2}||g||^2_{\mathcal{H}}+\frac{M_2}{2}||q||^2_{\mathcal{Q}}
\end{equation}
\begin{equation}\label{Jalphabi}J_{\alpha }(g,q)=\frac{1}{2}||u_{\alpha gq}-z_d||^2_{\mathcal{H}}+\frac{M_1}{2}||g||^2_{\mathcal{H}}+\frac{M_2}{2}||q||^2_{\mathcal{Q}},
\end{equation}
with $z_d \in \mathcal{H}$ given and $M_1,\,M_2$ positive
constants.

\par In \cite{GT1}, the authors studied boundary optimal control problems
on the heat flux  $q$ in mixed elliptic problems  and they proved
existence, uniqueness and asymptotic behavior to the optimal
solutions, when the heat transfer coefficient goes to infinity.
Similar results were obtained in \cite{GT2} for simultaneous
distributed-boundary optimal control problems on the internal energy
$g$ and the heat flux $q$ in mixed elliptic problems. In \cite{MT},
convergence results were proved for non-stationary heat conduction
problems in relation to distributed optimal control problems on the
internal energy $g$ as a control variable. In \cite{BT1} and
\cite{BT2}, were studied control problems on the source $g$ and the
flux $q$ respectively, for parabolic variational inequalities of
second kind. Other papers on the subject are
\cite{GS,GLMOP,GMO,SS,SB,SM,SA,V,WY,ZDB}. Our interest is the
convergence when $\alpha\rightarrow \infty$, which is related to
\cite{BFR,TT,T}. Variational inequalities was popular in the $70's$,
most of the main techniques for parabolic variational inequalities
can be found in \cite{BL}. It is well know that the regularity of
the mixed problem is problematic when both portions of the boundary
$\Gamma_{1}$ and $\Gamma_{2}$ have a nonempty intersection, e.g. see
the book \cite{G}. Sufficient conditions (on the data) to obtain a
$H^{2}$ regularity for an elliptic mixed boundary condition are given
in \cite{BBP}, see also \cite{AK}, among others. Numerical analysis
of a parabolic PDE with mixed boundary conditions (Dirichlet and
Neumann) is studied in \cite{BO}, while a parabolic control problem
with Robin boundary conditions is considered in \cite{BT,CGH}. In
the present paper, in Section 2 and Section 3, we study simultaneous
distributed-boundary optimal control for heat conduction problems
(\ref{P}),(\ref{pcd1bi}) and (\ref{Jbi}), and (\ref{Palfa}),
(\ref{pcd2bi}) and (\ref{Jalphabi}), respectively. We obtain
existence and  uniqueness results of the optimal controls and we
give the first order optimality condition in terms of the adjoint
states of the systems. In Section 4, we prove convergence results of
the optimal controls, and the system and adjoint states
corresponding to the problems (\ref{Palfa}), (\ref{pcd2bi}) and
(\ref{Jalphabi}), when the heat transfer coefficient $\alpha$ goes
to infinity. In Section 5, we study the relation between the
solutions of the distributed optimal control problems given in
\cite{MT} and the first component to the simultaneous
distributed-boundary optimal control problems (\ref{pcd1bi}) and
(\ref{pcd2bi}). Finally, we give a characterization of the
simultaneous optimal controls by using fixed point theory.

\section{System $S$ and its Corresponding Distributed-Boundary Optimal Control Problem}

Here, we prove the existence and uniqueness of the simultaneous
distributed-boundary optimal control
 $(\overline{\overline{g}},\overline{\overline{q}})$ for the optimal control problem (\ref{pcd1bi}) and
we give the optimality condition en terms of the adjoint state
$p_{\overline{\overline{g}}\,\overline{\overline{q}}}$.

Following \cite{GT2,Li,MT}, we
 define the application $C:\mathcal{H}\times\mathcal{Q}\rightarrow L^2 (V_0)$
 by $C(g,q)=u_{gq}-u_{00}$ where $u_{00}$ is the solution of the problem
(\ref{Pvariacional}) for $g=0$ and $q=0$. Moreover, we consider
$\Pi:(\mathcal{H}\times\mathcal{Q})\times
(\mathcal{H}\times\mathcal{Q})\rightarrow \mathbb{R}$ and
$\mathcal{L}:\mathcal{H}\times\mathcal{Q}\rightarrow \mathbb{R}$
defined by
\begin{equation*}\Pi((g,q),(h,\eta))=(C(g,q),C(h,\eta))_\mathcal{H}+M_1(g,h)_{\mathcal{H}}+M_2(q,\eta)_\mathcal{Q},\end{equation*}
\begin{equation*}\mathcal{L}(g,q)=(C(g,q),z_d-u_{00})_\mathcal{H}, \qquad\forall (g,q),\,(h,\eta)\in \mathcal{H}\times\mathcal{Q}\end{equation*}
and we give the following result.
\begin{lemma}\label{lema1bi}
\begin{enumerate}
                 \item [i)] $C$ is a linear and continuous application.
                 \item [ii)] $\Pi$ is a bilinear, symmetric, continuous
                 and coercive form.
                 \item [iii)] $\mathcal{L}$ is linear and continuous application in $\mathcal{H}\times\mathcal{Q}$.
                 \item [iv)] $J$ can be write as:
\[
                 J(g,q)=\frac{1}{2}\Pi((g,q),(g,q))-\mathcal{L}(g,q)+\frac{1}{2}||u_{00}-z_d||^2_{\mathcal{H}}, \quad  \forall (g,q)\,\in \mathcal{H}\times\mathcal{Q}.
                 \]
    \item [v)]  $J$ is a coercive functional on
                 $\mathcal{H}\times\mathcal{Q}$, that is, there exists $N>0$ such
                 that $\forall (g_2,q_2),\,(g_1,q_1)\in\mathcal{H}\times
                    \mathcal{Q},\,\,\forall t\in[0,1]$
                 \begin{equation*}\begin{split}
                    &\quad(1-t)J(g_2,q_2)+tJ(g_1,q_1)-J((1-t)(g_2,q_2)+t(g_1,q_1))\\&=
                    \frac{t(1-t)}{2}\left[||u_{g_2q_2}-u_{g_1q_1}||^2_{\mathcal{H}}+M_1||g_2-g_1||^2_{\mathcal{H}}+M_2||q_2-q_1||^2_{\mathcal{Q}}\right]\\&\geq
                    \frac{Nt(1-t)}{2}||(g_2-g_1,q_2-q_1)||^2_{\mathcal{H}\times\mathcal{Q}}.
                 \end{split}\end{equation*}

                 \item [vi)] There exists a unique optimal control $(\overline{\overline{g}},\overline{\overline{q}})\in
                 \mathcal{H}\times\mathcal{Q}$ such that (\ref{pcd1bi}) holds.
\end{enumerate}
\end{lemma}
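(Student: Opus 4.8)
The plan is to establish parts i)--vi) in sequence, since each part feeds the next and the final existence-uniqueness statement vi) is a direct consequence of i)--v) via the abstract theory of quadratic functionals.

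For i), I would first recall that the map $(g,q)\mapsto u_{gq}$ is affine: by linearity of the variational problem (\ref{Pvariacional}) in the data $(g,q)$ entering through $L(t,v)=(g(t),v)_H-(q(t),v)_Q$, the difference $u_{gq}-u_{00}$ solves the homogeneous-data problem with right-hand side $L$ built from $(g,q)$, so $C(g,q)=u_{gq}-u_{00}$ is genuinely linear. Continuity from $\mathcal{H}\times\mathcal{Q}$ into $L^2(V_0)$ follows from the standard a priori estimate for parabolic variational equalities: testing the equation for $C(g,q)$ with $v=C(g,q)(t)$, using the coercivity constant $\lambda_0$ on $V_0$, the trace inequality to control $(q(t),\cdot)_Q$, and Gronwall, one gets $\|C(g,q)\|_{L^2(V_0)}\le c\,\|(g,q)\|_{\mathcal{H}\times\mathcal{Q}}$. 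For ii), bilinearity and symmetry of $\Pi$ are immediate from its definition and from i); continuity follows from continuity of $C$ together with Cauchy--Schwarz; coercivity is the key point and comes from the two nonnegative quadratic terms $M_1\|g\|_{\mathcal{H}}^2+M_2\|q\|_{\mathcal{Q}}^2$, giving $\Pi((g,q),(g,q))\ge \min(M_1,M_2)\,\|(g,q)\|_{\mathcal{H}\times\mathcal{Q}}^2$, so one may take any $N\le\min(M_1,M_2)$ (this is also the constant that appears in v)). For iii), linearity of $\mathcal{L}$ follows from linearity of $C$ and of the inner product in the first slot, and continuity from continuity of $C$ and Cauchy--Schwarz with $z_d-u_{00}$ fixed in $\mathcal{H}$.

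For iv), I would expand $J(g,q)=\tfrac12\|u_{gq}-z_d\|_{\mathcal{H}}^2+\tfrac{M_1}{2}\|g\|_{\mathcal{H}}^2+\tfrac{M_2}{2}\|q\|_{\mathcal{Q}}^2$ by writing $u_{gq}-z_d=C(g,q)+(u_{00}-z_d)$ and using bilinearity of the $\mathcal{H}$-inner product; collecting terms yields exactly $\tfrac12\Pi((g,q),(g,q))-\mathcal{L}(g,q)+\tfrac12\|u_{00}-z_d\|_{\mathcal{H}}^2$, where the cross term $(C(g,q),u_{00}-z_d)_{\mathcal{H}}$ matches $-\mathcal{L}(g,q)$ by definition of $\mathcal{L}$. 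For v), I would use the identity, valid for any quadratic functional of the form in iv), that $(1-t)J(y_2)+tJ(y_1)-J((1-t)y_2+ty_1)=\tfrac{t(1-t)}{2}\Pi(y_2-y_1,y_2-y_1)$ for $y_i=(g_i,q_i)$; the linear part $\mathcal{L}$ and the constant cancel, and $\Pi(y_2-y_1,y_2-y_1)=\|u_{g_2q_2}-u_{g_1q_1}\|_{\mathcal{H}}^2+M_1\|g_2-g_1\|_{\mathcal{H}}^2+M_2\|q_2-q_1\|_{\mathcal{Q}}^2$ because $C(g_2,q_2)-C(g_1,q_1)=u_{g_2q_2}-u_{g_1q_1}$ by linearity of $C$; the final lower bound is the coercivity of $\Pi$ from ii). Finally, vi) follows from the classical result that a functional of the form $\tfrac12\Pi(v,v)-\mathcal{L}(v)+\text{const}$ on a Hilbert space, with $\Pi$ bilinear symmetric continuous coercive and $\mathcal{L}$ linear continuous, attains a unique minimum (equivalently, $J$ is continuous, strictly convex by v), and coercive in the norm sense since v) with $t\to0$ or a direct estimate gives $J(g,q)\to\infty$ as $\|(g,q)\|\to\infty$); hence there is a unique $(\overline{\overline g},\overline{\overline q})$ solving (\ref{pcd1bi}).

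The main obstacle is the a priori estimate underlying the continuity of $C$ in i): one must handle the mixed boundary condition carefully, since coercivity of $a$ is only assumed on $V_0$ (not on all of $V$), and control the boundary term $(q(t),v)_Q$ via a trace theorem and Young's inequality before applying Gronwall's lemma to pass from the energy estimate to the $L^2(V_0)$ bound; this is exactly the place where the regularity issues for mixed problems mentioned in the introduction could bite, but for the variational (weak) formulation the standard Lions-type argument goes through. Everything after i)--iii) is essentially formal algebra with quadratic forms, and vi) is a citation to the standard existence theorem (e.g. \cite{Li,Tro}).
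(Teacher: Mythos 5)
Your proposal is correct and follows essentially the same route as the paper: the paper's proof of (i) consists of the same energy estimates for $C(g,q)$ (citing \cite{MT} for the details you spell out), and (ii)--(vi) are exactly the quadratic-functional algebra and the standard existence--uniqueness theorem for coercive symmetric forms that the paper delegates to \cite{GT2,Li}. Your identification of the coercivity constant $N=\min(M_1,M_2)$ and the convexity identity in (v) match the paper's statement precisely; the only cosmetic difference is that the paper also records the $L^\infty(H)$ and $L^2(V_0')$ bounds on $C(g,q)$ and $\dot C(g,q)$, which are not needed for the lemma as stated.
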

\begin{proof}
(i) Following \cite{MT}, we obtain that there exist different positive constants $K$ such that
\begin{equation}\nonumber
    ||\nabla C(g,q)||_\mathcal{H}
\leq
K ||(g,q)||_{\mathcal{H}\times \mathcal{Q}}.
\end{equation}
\begin{equation*} ||C (g,q)||_{L^{\infty}(H)}\leq
K||(g,q)||_{\mathcal{H}\times \mathcal{Q}}.
\end{equation*}
\begin{equation*}
    \left[\int_{0}^{T}\bigg|\bigg|\frac{d}{dt}C(g,q)(t)\bigg|\bigg|^{2}_{V_0'}dt\right]^{1/2}
   \leq   K||(g,q)||_{\mathcal{H}\times \mathcal{Q}}.
  \end{equation*}
Therefore, $C:\mathcal{H}\times \mathcal{Q}\rightarrow \{v\in L^2(V_0)\cap L^{\infty}(H): \dot{v}\in
L^2(V_0')\}$ is a continuous operator.

(ii)- (v) It follows from the definition of $J$, $\Pi$ and $\mathcal{L}$ and a similar way that \cite{GT2}.

(vi)  It follows taking into account \cite{GT2,Li}.
\end{proof}

We define the adjoint state $p_{gq}$ corresponding to problem
(\ref{Pvariacional}) for each  $(g,q)\in
\mathcal{H}\times\mathcal{Q}$, as the unique solution of the
variational equality
\begin{equation}\label{ecvarbi}
\left\{
\begin{array}{l l}
p_{gq}\in L^2(V_0), \quad p_{gq}(T)=0\quad\text{ and }\quad
\dot{p}_{gq}\in L^2(V_0') \\ \text{such that} \,\, -\langle
\dot{p}_{gq}(t), v\rangle+a(p_{gq}(t),v)=(u_{gq}(t)-z_d,v)_H, \,\,
\forall v\in V_0.
\end{array}
\right.
\end{equation}
\begin{lemma}\label{lema22bi}
\quad
\begin{enumerate}\item [i)] The adjoint state $p_{gq}$ satisfies
the following equality
\begin{small}$$(C(h,\eta),u_{gq}-z_d)_\mathcal{H}=(h,p_{gq})_\mathcal{H}-(\eta,p_{gq})_\mathcal{Q}.$$\end{small}
                 \item [ii)] $J$ is G\^{a}teaux differentiable
                  and $J^{\prime}$ is given by: $\forall (g,q),(h,\eta)\in \mathcal{H}\times
                  \mathcal{Q}$
                   \begin{equation*}\begin{split}
                  &\quad\langle J^{\prime}(g,q),(h-g,\eta-q)\rangle\\&=(u_{h\eta}-u_{gq},u_{gq}-z_d)_\mathcal{H}
                  +M_1(g,h-g)_\mathcal{H}+M_2(q,\eta-q)_\mathcal{Q}\\&=\Pi((g,q),(h-g,\eta-q))-\mathcal{L}(h-g,\eta-q).
                   \end{split}\end{equation*}
                 \item [iii)] The G\^{a}teaux derivative of $J$ can be write as: $\forall(h,\eta)\in\mathcal{H}\times\mathcal{Q}$
                 \begin{equation*}\begin{split}
                  \langle J^{\prime}(g,q),(h,\eta)\rangle= (M_1g+p_{gq},h)_\mathcal{H}+(M_2q-p_{gq},\eta)_\mathcal{Q}.
                 \end{split}\end{equation*}
                 \item [iv)] The optimality condition for the
                 problem (\ref{pcd1bi}) is:
                 $\forall(h,\eta)\in\mathcal{H}\times\mathcal{Q}$
                 \begin{equation*}\begin{split}
                  \langle J^{\prime}(\overline{\overline{g}},\overline{\overline{q}}),(h,\eta)\rangle=
                  (M_1\overline{\overline{g}}+p_{\overline{\overline{g}}\,\overline{\overline{q}}},h)_\mathcal{H}+
                  (M_2\overline{\overline{q}}&-p_{\overline{\overline{g}}\,\overline{\overline{q}}},\eta)_\mathcal{Q}=0.
                 \end{split}\end{equation*}\end{enumerate}
\end{lemma}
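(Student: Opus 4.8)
The plan is to prove Lemma~\ref{lema22bi} in the order the four items are stated, since each rests on the previous one.

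For part (i), I would start from the definition $C(h,\eta)=u_{h\eta}-u_{00}$ and observe that by linearity of $C$ (Lemma~\ref{lema1bi}(i)), $C(h,\eta)$ is the solution $w$ of the parabolic problem (\ref{Pvariacional}) with zero initial data and source data $(h,\eta)$ in place of $(g,q)$; that is, $w\in L^2(V_0)$, $w(0)=0$, and $\langle\dot w(t),v\rangle+a(w(t),v)=(h(t),v)_H-(\eta(t),v)_Q$ for all $v\in V_0$. The idea is then to test this identity with $v=p_{gq}(t)$ and to test the adjoint equation (\ref{ecvarbi}) with $v=w(t)=C(h,\eta)(t)$, subtract, and integrate over $(0,T)$. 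The two terms $a(w(t),p_{gq}(t))$ cancel, and the time-derivative terms combine to $\int_0^T\frac{d}{dt}(w(t),p_{gq}(t))_H\,dt=(w(T),p_{gq}(T))_H-(w(0),p_{gq}(0))_H=0$ because $w(0)=0$ and $p_{gq}(T)=0$. What remains is $\int_0^T(h(t),p_{gq}(t))_H\,dt-\int_0^T(\eta(t),p_{gq}(t))_Q\,dt=\int_0^T(u_{gq}(t)-z_d,w(t))_H\,dt$, which is exactly $(h,p_{gq})_\mathcal{H}-(\eta,p_{gq})_\mathcal{Q}=(C(h,\eta),u_{gq}-z_d)_\mathcal{H}$.

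For part (ii), I would use item (iv) of Lemma~\ref{lema1bi}, which writes $J$ as a quadratic functional $\frac12\Pi((g,q),(g,q))-\mathcal L(g,q)+\text{const}$ with $\Pi$ bilinear symmetric continuous and $\mathcal L$ linear continuous; such a functional is G\^ateaux differentiable with $\langle J'(g,q),(h,\eta)\rangle=\Pi((g,q),(h,\eta))-\mathcal L(h,\eta)$, and one computes the difference-quotient directly to confirm this. Then unwinding the definitions of $\Pi$ and $\mathcal L$ and using $C(h,\eta)=u_{h\eta}-u_{00}$, $C(g,q)=u_{gq}-u_{00}$, so that $C(h,\eta)-C(g,q)=u_{h\eta}-u_{gq}$, gives $\Pi((g,q),(h-g,\eta-q))-\mathcal L(h-g,\eta-q)=(u_{h\eta}-u_{gq},\,u_{gq}-z_d)_\mathcal H+M_1(g,h-g)_\mathcal H+M_2(q,\eta-q)_\mathcal Q$, which is the stated formula (with $(h,\eta)$ replaced by $(h-g,\eta-q)$ as the direction). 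Part (iii) is then just part (ii) applied with direction $(h,\eta)$ instead of $(h-g,\eta-q)$, combined with part (i): the term $(C(h,\eta),u_{gq}-z_d)_\mathcal H$ becomes $(h,p_{gq})_\mathcal H-(\eta,p_{gq})_\mathcal Q$, and collecting terms yields $\langle J'(g,q),(h,\eta)\rangle=(M_1g+p_{gq},h)_\mathcal H+(M_2q-p_{gq},\eta)_\mathcal Q$. Part (iv) follows since $\mathcal H\times\mathcal Q$ is a vector space and $(\overline{\overline g},\overline{\overline q})$ minimizes $J$ over it, so the G\^ateaux derivative must vanish in every direction; substituting into (iii) gives the stated optimality condition.

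The only genuinely delicate point is the integration-by-parts in part (i): one must justify that $t\mapsto(w(t),p_{gq}(t))_H$ is absolutely continuous with derivative $\langle\dot w(t),p_{gq}(t)\rangle+\langle\dot p_{gq}(t),w(t)\rangle$, which is the standard Lions--Magenes integration-by-parts formula valid because $w,p_{gq}\in L^2(V_0)$ with $\dot w,\dot p_{gq}\in L^2(V_0')$ (both lie in $W(0,T)$); this is precisely the regularity furnished by (\ref{Pvariacional}), (\ref{ecvarbi}) and Lemma~\ref{lema1bi}(i), so the rest is bookkeeping. Everything else is routine linear-functional-analysis computation of the type carried out in \cite{GT2,MT}.
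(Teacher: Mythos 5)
Your proposal is correct and follows essentially the same route as the paper: part (i) is proved by testing the adjoint equation (\ref{ecvarbi}) with $C(h,\eta)(t)$ and the difference of the state equations with $p_{gq}(t)$, integrating in time, and killing the boundary terms via $p_{gq}(T)=0$ and $C(h,\eta)(0)=0$; parts (ii)--(iv) then follow from the quadratic structure of $J$ exactly as in the paper (which defers (ii) to \cite{GT2,MT}). Your explicit justification of the integration by parts via the $W(0,T)$ regularity is a welcome extra precision, not a deviation.
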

\begin{proof} (i) Following \cite{MT}, if we take $v=C(h,\eta)(t)\in V_0$ in (\ref{ecvarbi}) and we integrate between $0$ and $T$, we have \begin{equation}
 -\left( \dot{p}_{gq},
C(h,\eta)\right)_\mathcal{H}+\int_0^Ta(p_{pq}(t),C(h,\eta)(t))dt=(u_{gq}-z_d,C(h,\eta))_\mathcal{H}.\nonumber
\end{equation}
On the other hand, taking $v=p_{gq}(t)$ in (\ref{Pvariacional}), for
$g=0,\,q=0$ and  $g=h,\,q=\eta$, we obtain \small{\begin{equation*}
( \dot{u}_{h\eta}(t)-\dot{u}_{00}(t),
p_{gq}(t))_H+a(u_{h\eta}(t)-u_{00}(t),p_{gq}(t))=(h(t),p_{gq}(t))_H-(\eta(t),p_{gq}(t))_Q
\end{equation*}}
and integrating between $0$ and $T$, we have
\begin{equation} \begin{split}\left(
\dot{u}_{h\eta}-\dot{u}_{00},
p_{gq}\right)_{\mathcal{H}}&+\int_0^Ta(C(h,\eta)(t),p_{gq}(t))dt=(h,p_{gq})_{\mathcal{H}}-(\eta,p_{gq})_{\mathcal{Q}}.
\end{split}\nonumber\end{equation}
Therefore \small{\begin{equation}\begin{split}
 &(C(h,\eta),u_{gq}-z_d)_\mathcal{H}=
-\int_0^T\frac{d}{dt}\left( p_{gq}(t),
C(h,\eta)(t)\right)_Hdt+(h,p_{gq})_\mathcal{H}-(\eta,p_{gq})_\mathcal{Q}
\end{split}\nonumber\end{equation}}
and by using that $p_{gq}(T)=0$ and $C(h,\eta)(0)=0$, we have (i).

(ii) It follows in similar way that \cite{GT2,MT}.

(iii) From (i) and (ii), we obtain
$$\langle J^{\prime}(g,q),(h,\eta)\rangle = (M_1g+p_{gq},h)_\mathcal{H}+(M_2q-p_{gq},\eta)_\mathcal{Q},\quad
\forall(h,\eta)\in \mathcal{H}\times \mathcal{Q}.$$

(iv)  From (iii), we have
$$(M_1\overline{\overline{g}}+p_{\overline{\overline{g}}\,\overline{\overline{q}}},h)_\mathcal{H}
+(M_2\overline{\overline{q}}-p_{\overline{\overline{g}}\,\overline{\overline{q}}},\eta)_\mathcal{Q}=0,\qquad\forall(h,\eta)\in\mathcal{H}\times\mathcal{Q}.$$
\end{proof}

\section{System $S_{\alpha}$ and its Corresponding Distributed-Boundary Optimal Control Problem}

We will prove, for each $\alpha >0$, the existence and uniqueness of
the simultaneous distributed-boundary optimal control
$(\overline{\overline{g}}_{\alpha},\overline{\overline{q}}_{\alpha})\in
\mathcal{H}\times\mathcal{Q}$ for the problem (\ref{pcd2bi}) and we
will give the optimality condition in terms of the adjoint state
$p_{\alpha\overline{\overline{g}}_{\alpha}\overline{\overline{q}}_{\alpha}}$.
For this purpose, following \cite{GT2,Li,MT}, we define
$C_{\alpha}:\mathcal{H}\times\mathcal{Q}\rightarrow L^2 (V)$ given
by $C_{\alpha}(g,q)=u_{\alpha gq}-u_{\alpha 00}$, where $u_{\alpha
00}$ is the solution of the variational problem
(\ref{Palfavariacional}) for $g=0$ and $q=0$, and
$\Pi_{\alpha}:(\mathcal{H}\times\mathcal{Q})\times
(\mathcal{H}\times\mathcal{Q})\rightarrow \mathbb{R}$ and
$\mathcal{L}_{\alpha}:\mathcal{H}\times\mathcal{Q}\rightarrow
\mathbb{R}$ are defined by
\begin{equation*}\Pi_{\alpha}((g,q),(h,\eta))=(C_{\alpha}(g,q),C_{\alpha}(h,\eta))_\mathcal{H}+M_1(g,h)_{\mathcal{H}}+M_2(q,\eta)_\mathcal{Q},\end{equation*}
\begin{equation*}\mathcal{L}_{\alpha}(g,q)=(C_{\alpha}(g,q),z_d-u_{\alpha 00})_\mathcal{H} \qquad\quad\forall (g,q),\,(h,\eta)\in \mathcal{H}\times\mathcal{Q}.\end{equation*}
\begin{lemma}\label{lema1bia}
\quad
\begin{enumerate}
                 \item [i)] $C_{\alpha}$ is a linear and continuous application.
                 \item [ii)] $\Pi_{\alpha}$ is bilinear, symmetric, continuous and coercive form.
                 \item [iii)] $\mathcal{L}_{\alpha}$ is linear and continuous in $\mathcal{H}\times\mathcal{Q}.$
                 \item [iv)] $J_{\alpha}$ can be write as \small{$$J_{\alpha}(g,q)=\frac{1}{2}\Pi_{\alpha}((g,q),(g,q))-
                 \mathcal{L}_{\alpha}(g,q)+\frac{1}{2}||u_{\alpha 00}-z_d||^2_{\mathcal{H}},\,\forall (g,q)\in
                 \mathcal{H}\times\mathcal{Q}.$$}
                \item [v)] $J_{\alpha}$ is a coercive functional on
                 $\mathcal{H}\times\mathcal{Q}$, that is, there exists $\overline{N}>0$ such
                 that $\forall (g_2,q_2),\,(g_1,q_1)\in\mathcal{H}\times
                    \mathcal{Q},\,\,\forall t\in[0,1]$
                 \begin{equation*}\begin{split}
                    &\quad(1-t)J_{\alpha}(g_2,q_2)+tJ_{\alpha}(g_1,q_1)-J_{\alpha}((1-t)(g_2,q_2)+t(g_1,q_1))\\&=
                    \frac{t(1-t)}{2}\left[||u_{g_2q_2}-u_{g_1q_1}||^2_{\mathcal{H}}+M_1||g_2-g_1||^2_{\mathcal{H}}+M_2||q_2-q_1||^2_{\mathcal{Q}}\right]\\&\geq
                    \frac{\overline{N}t(1-t)}{2}||(g_2-g_1,q_2-q_1)||^2_{\mathcal{H}\times\mathcal{Q}}.
                 \end{split}\end{equation*}
                 \item [vi)] There exists a unique optimal control
$(\overline{\overline{g}}_{\alpha},\overline{\overline{q}}_{\alpha})$
such that (\ref{pcd2bi}) holds.
               \end{enumerate}
\end{lemma}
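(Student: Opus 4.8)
The plan is to follow the proof of Lemma~\ref{lema1bi} essentially line by line, replacing $V_0$ by $V$, the form $a$ by $a_\alpha$, the operator $C$ by $C_\alpha$, and $u_{00}$ by $u_{\alpha 00}$ throughout. For each fixed $\alpha>0$ the abstract structure is identical to the one in Section~2, so the only genuinely new ingredient is the a priori estimate underlying part (i).

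For (i), I would first observe that $C_\alpha(g,q)=u_{\alpha gq}-u_{\alpha 00}$ solves the variational problem (\ref{Palfavariacional}) with zero initial datum and right-hand side $L(t,v)=(g(t),v)_H-(q(t),v)_Q$: the terms $\alpha\int_{\Gamma_1}bv\,d\gamma$ cancel by linearity, so the $\alpha$-dependence enters only through $a_\alpha$. Linearity of $(g,q)\mapsto C_\alpha(g,q)$ is then immediate. For continuity, I would test the equation with $v=C_\alpha(g,q)(t)\in V$, use that $a_\alpha(v,v)=\|\nabla v\|^2_{L^2(\Omega)}+\alpha\|v\|^2_{L^2(\Gamma_1)}\geq\lambda_\alpha\|v\|^2_V$ for some $\lambda_\alpha>0$ (a consequence of a generalized Poincaré inequality, using $|\Gamma_1|>0$), bound the boundary term $(q(t),v)_Q$ by a trace inequality together with Young's inequality, and apply Gronwall's lemma to obtain
\[
\|C_\alpha(g,q)\|_{L^\infty(H)}+\|C_\alpha(g,q)\|_{L^2(V)}\leq K\,\|(g,q)\|_{\mathcal{H}\times\mathcal{Q}},
\]
after which the bound on $\dot C_\alpha(g,q)$ in $L^2(V')$ follows directly from the equation. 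This is the step I expect to be the main obstacle: every other part is purely algebraic, whereas here one must verify that $a_\alpha$ is coercive on the whole of $V$ (not merely on $V_0$) and that the trace terms are controlled; the constant $K$ will of course depend on $\alpha$, which is acceptable at this stage.

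Parts (ii)--(iv) then follow formally exactly as in Lemma~\ref{lema1bi}: $\Pi_\alpha$ is bilinear and symmetric by inspection, continuous by (i), and coercive because $\Pi_\alpha((g,q),(g,q))=\|C_\alpha(g,q)\|^2_{\mathcal{H}}+M_1\|g\|^2_{\mathcal{H}}+M_2\|q\|^2_{\mathcal{Q}}\geq\min(M_1,M_2)\,\|(g,q)\|^2_{\mathcal{H}\times\mathcal{Q}}$; $\mathcal{L}_\alpha$ is linear and continuous by Cauchy--Schwarz together with (i); and (iv) is obtained by expanding $\|u_{\alpha gq}-z_d\|^2_{\mathcal{H}}=\|C_\alpha(g,q)+(u_{\alpha 00}-z_d)\|^2_{\mathcal{H}}$ and substituting into (\ref{Jalphabi}). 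For (v) I would compute both sides of the claimed identity using the expression in (iv) and the bilinearity of $\Pi_\alpha$; the quadratic remainder equals $\frac{t(1-t)}{2}\Pi_\alpha((g_2-g_1,q_2-q_1),(g_2-g_1,q_2-q_1))$, which by the coercivity just noted is bounded below by $\frac{\overline{N}\,t(1-t)}{2}\|(g_2-g_1,q_2-q_1)\|^2_{\mathcal{H}\times\mathcal{Q}}$ with $\overline{N}=\min(M_1,M_2)$ (in fact independent of $\alpha$). Finally, (vi) is a standard application of the direct method: by (iv) and (v), $J_\alpha$ is continuous, strictly convex and coercive on the Hilbert space $\mathcal{H}\times\mathcal{Q}$, hence it admits a unique minimizer, which is the desired optimal control $(\overline{\overline{g}}_{\alpha},\overline{\overline{q}}_{\alpha})$.
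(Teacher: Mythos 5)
Your proposal is correct and follows essentially the same route as the paper, which simply states that the result is obtained ``in a similar way'' to Lemma \ref{lema1bi} and omits the details; you have in fact supplied those details, including the one genuinely new point (coercivity of $a_{\alpha}$ on all of $V$ via the generalized Poincar\'e inequality with $|\Gamma_1|>0$, giving an $\alpha$-dependent constant $\lambda_1\min\{1,\alpha\}$) correctly. The remaining parts (ii)--(vi) are the same purely algebraic and direct-method arguments as in Section 2, with $\overline{N}=\min(M_1,M_2)$ independent of $\alpha$ as you note.
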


\begin{proof}
This results in a similar way that Lemma \ref{lema1bi} and the proof is omitted.
\end{proof}

We define the adjoint state $p_{\alpha gq}$ corresponding to
(\ref{Palfavariacional}) for each  $(g,q)\in
\mathcal{H}\times\mathcal{Q}$, as the unique solution of
\small{\begin{equation}\label{palphavariacional} \left\{
\begin{array}{l l}
p_{\alpha gq} \in L^2(V), \quad p_{\alpha
gq}(T)=0\quad\text{and}\quad \dot{p}_{\alpha gq}\in L^2(V') \\
\text{such that}\, -\langle \dot{p}_{\alpha gq}(t),
v\rangle+a_{\alpha}(p_{\alpha gq}(t),v)=(u_{\alpha gq}(t)-z_d,v)_H,
\, \forall v\in V
\end{array}
\right.
\end{equation}}
and for each $\alpha >0$, we obtain analogous properties to Lemma \ref{lema22bi}, whose proof will be also omitted.
\begin{lemma}\label{lema32bi}
\quad
\begin{enumerate}\item [i)] The adjoint state $p_{\alpha gq}$ satisfies
the following equality
\begin{small}$$(C(h,\eta),u_{\alpha gq}-z_d)_\mathcal{H}=(h,p_{\alpha gq})_\mathcal{H}-(\eta,p_{\alpha gq})_\mathcal{Q}.$$\end{small}
                 \item [ii)] $J_{\alpha}$ is G\^{a}teaux differentiable
                  and $J_{\alpha}^{\prime}$ is given by $\forall (g,q),(h,\eta)\in \mathcal{H}\times
                  \mathcal{Q}$
                   \begin{equation*}\begin{split}
                  &\quad\langle J_{\alpha}^{\prime}(g,q),(h-g,\eta-q)\rangle\\&=(u_{\alpha h\eta}-u_{\alpha gq},u_{\alpha gq}-z_d)_\mathcal{H}
                  +M_1(g,h-g)_\mathcal{H}+M_2(q,\eta-q)_\mathcal{Q}\\&=\Pi_{\alpha}((g,q),(h-g,\eta-q))-\mathcal{L}_{\alpha}(h-g,\eta-q).
                   \end{split}\end{equation*}
                 \item [iii)] The G\^{a}teaux derivative of $J_{\alpha}$ can be write
                 as, $\forall(h,\eta)\in\mathcal{H}\times\mathcal{Q}$
                 \begin{equation*}\begin{split}
                  \langle J_{\alpha}^{\prime}(g,q),(h,\eta)\rangle= (M_1g+p_{\alpha gq},h)_\mathcal{H}+(M_2q-p_{\alpha gq},\eta)_\mathcal{Q}.
                 \end{split}\end{equation*}
                 \item [iv)] The optimality condition for the
                 problem (\ref{pcd2bi}) is,
                 $\forall(h,\eta)\in\mathcal{H}\times\mathcal{Q}$
                 \begin{equation*}
                  \langle J_{\alpha}^{\prime}(\overline{\overline{g}}_{\alpha},\overline{\overline{q}}_{\alpha}),(h,\eta)\rangle=
                  (M_1\overline{\overline{g}}_{\alpha}+p_{\alpha\overline{\overline{g}}_{\alpha}\,\overline{\overline{q}}_{\alpha}},h)_\mathcal{H}+
                  (M_2\overline{\overline{q}}_{\alpha}-p_{\alpha\overline{\overline{g}}_{\alpha}\,\overline{\overline{q}}_{\alpha}},\eta)_\mathcal{Q}=0.
                 \end{equation*}\end{enumerate}
\end{lemma}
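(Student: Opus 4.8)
The plan is to repeat, almost verbatim, the argument used for Lemma~\ref{lema22bi}, replacing the space $V_0$ by $V$, the bilinear form $a$ by $a_\alpha$, the operator $C$ by $C_\alpha$, and the solutions/adjoints by their $\alpha$-counterparts. The only genuinely new feature is the boundary term on $\Gamma_1$ carried by $a_\alpha$ and by $L_\alpha$; since both the adjoint state $p_{\alpha gq}$ and the increment $C_\alpha(h,\eta)$ now belong to $L^2(V)$ (not merely $L^2(V_0)$), all these boundary integrals are well defined, and they will either cancel by symmetry of $a_\alpha$ or appear identically in two equations and subtract out. Throughout one tacitly uses the well-posedness of \eqref{palphavariacional}, i.e. the coercivity of $a_\alpha$ on $V$, which follows from a generalized Poincar\'e inequality on $V$ incorporating the $\Gamma_1$-boundary norm.

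For (i), fix $(g,q)$ and $(h,\eta)$ in $\mathcal{H}\times\mathcal{Q}$. Since $C_\alpha(h,\eta)(t)\in V$ for a.e.\ $t$, I take $v=C_\alpha(h,\eta)(t)$ in \eqref{palphavariacional} and integrate on $(0,T)$:
\[
-\left(\dot{p}_{\alpha gq},C_\alpha(h,\eta)\right)_{\mathcal{H}}+\int_0^T a_\alpha(p_{\alpha gq}(t),C_\alpha(h,\eta)(t))\,dt=(u_{\alpha gq}-z_d,C_\alpha(h,\eta))_{\mathcal{H}}.
\]
Next, since $p_{\alpha gq}(t)\in V$, I take $v=p_{\alpha gq}(t)$ in \eqref{Palfavariacional} once for $(g,q)=(0,0)$ and once for $(g,q)=(h,\eta)$ and subtract: the datum contribution $\alpha\int_{\Gamma_1}b\,p_{\alpha gq}(t)\,d\gamma$ is common to both $L_\alpha$'s and cancels, while bilinearity of $a_\alpha$ gives
\[
(\dot{u}_{\alpha h\eta}(t)-\dot{u}_{\alpha 00}(t),p_{\alpha gq}(t))_H+a_\alpha(C_\alpha(h,\eta)(t),p_{\alpha gq}(t))=(h(t),p_{\alpha gq}(t))_H-(\eta(t),p_{\alpha gq}(t))_Q.
\]
Integrating this on $(0,T)$, using the symmetry of $a_\alpha$ to identify $\int_0^T a_\alpha(p_{\alpha gq},C_\alpha(h,\eta))=\int_0^T a_\alpha(C_\alpha(h,\eta),p_{\alpha gq})$, and subtracting from the first integrated identity, the two $a_\alpha$-integrals cancel and the time-derivative terms collect into $-\int_0^T \frac{d}{dt}(p_{\alpha gq}(t),C_\alpha(h,\eta)(t))_H\,dt$, which vanishes because $p_{\alpha gq}(T)=0$ and $C_\alpha(h,\eta)(0)=u_{\alpha 00}(0)-u_{\alpha 00}(0)=0$. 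This yields $(C_\alpha(h,\eta),u_{\alpha gq}-z_d)_{\mathcal{H}}=(h,p_{\alpha gq})_{\mathcal{H}}-(\eta,p_{\alpha gq})_{\mathcal{Q}}$.

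For (ii), starting from the decomposition of $J_\alpha$ in Lemma~\ref{lema1bia}(iv) and the bilinearity, symmetry and continuity of $\Pi_\alpha$, the directional derivative is computed exactly as in \cite{GT2,MT}, giving $\langle J_\alpha'(g,q),(h-g,\eta-q)\rangle=\Pi_\alpha((g,q),(h-g,\eta-q))-\mathcal{L}_\alpha(h-g,\eta-q)$, which by the definitions of $\Pi_\alpha$, $\mathcal{L}_\alpha$ and $C_\alpha$ (and the affine dependence $u_{\alpha gq}=u_{\alpha 00}+C_\alpha(g,q)$) equals $(u_{\alpha h\eta}-u_{\alpha gq},u_{\alpha gq}-z_d)_{\mathcal{H}}+M_1(g,h-g)_{\mathcal{H}}+M_2(q,\eta-q)_{\mathcal{Q}}$. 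For (iii), relabelling the increment (use linearity of $J_\alpha'$ in the direction, i.e. replace $h-g$ by $h$ and $\eta-q$ by $\eta$) turns $u_{\alpha h\eta}-u_{\alpha gq}$ into $C_\alpha(h,\eta)$, so part (i) replaces $(u_{\alpha h\eta}-u_{\alpha gq},u_{\alpha gq}-z_d)_{\mathcal{H}}$ by $(h,p_{\alpha gq})_{\mathcal{H}}-(\eta,p_{\alpha gq})_{\mathcal{Q}}$; collecting terms gives $\langle J_\alpha'(g,q),(h,\eta)\rangle=(M_1 g+p_{\alpha gq},h)_{\mathcal{H}}+(M_2 q-p_{\alpha gq},\eta)_{\mathcal{Q}}$. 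Finally, for (iv), since by Lemma~\ref{lema1bia}(vi) the pair $(\overline{\overline{g}}_\alpha,\overline{\overline{q}}_\alpha)$ minimizes the G\^ateaux-differentiable convex functional $J_\alpha$ over the whole space $\mathcal{H}\times\mathcal{Q}$, the Euler equation $J_\alpha'(\overline{\overline{g}}_\alpha,\overline{\overline{q}}_\alpha)=0$ holds, which is precisely the stated identity.

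The only delicate point — more bookkeeping than obstacle — is keeping track of the $\Gamma_1$-boundary contributions: one must check that the Robin term $\alpha\int_{\Gamma_1}u_{\alpha gq}\,v\,d\gamma$ inside $a_\alpha$ and the datum term $\alpha\int_{\Gamma_1}b\,v\,d\gamma$ inside $L_\alpha$ are legitimate when $v$ ranges over $V$, and that they drop out pairwise (by symmetry of $a_\alpha$, or by occurring identically in the $(0,0)$ and $(h,\eta)$ equations). Once this is verified, the computation is formally identical to that of Lemma~\ref{lema22bi}, which is why the proof is omitted.
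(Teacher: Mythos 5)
Your proof is correct and is exactly the argument the paper intends: the paper omits the proof of Lemma~\ref{lema32bi}, declaring it analogous to Lemma~\ref{lema22bi}, and your adaptation (replacing $V_0$, $a$, $C$ by $V$, $a_\alpha$, $C_\alpha$, and checking that the $\alpha\int_{\Gamma_1}b\,v\,d\gamma$ terms cancel in the subtraction while the $a_\alpha$ terms cancel by symmetry) is precisely that analogy carried out. Note only that part (i) as printed writes $C(h,\eta)$ where the identity you correctly derive involves $C_\alpha(h,\eta)$ --- an apparent typo in the paper's statement, not a gap in your argument.
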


\section{Convergence of Simultaneous Distributed Boundary Optimal Control Problems when $\alpha\rightarrow\infty$}

For fixed $(g,q)\in \mathcal{H}\times\mathcal{Q}$, we can prove
estimations for $u_{\alpha gq}$ and $p_{\alpha gq}$ and we obtain
the strong convergence to $u_{gq}$ and $p_{gq}$, respectively, when
$\alpha$ goes to infinity.
\begin{proposition}\label{proposicion41bi} For fixed $(g,q)$, if $u_{\alpha gq}$ is the unique solution of the variational equality (\ref{Palfavariacional}),
we have
\begin{small}\begin{equation}\label{estimacion1bi}
||\dot{u}_{\alpha gq}||_{L^{2}(V_0')}+   ||u_{\alpha
gq}||_{L^{\infty}(H)}+
    ||u_{\alpha gq}||_{L^{2}(V)}
     +\sqrt{(\alpha-1)}
    ||u_{\alpha gq}-b||_{L^{\infty}(L^2(\Gamma_1))}\leq K,
\end{equation}\end{small}
\newline for all $\alpha>1$, with $K$ depending of
$||\dot{u}_{gq}||_{L^{2}(V_0')}$, $||\dot{u}_{gq}||_{L^{2}(V')}$,
$||\nabla u_{gq}||_{\mathcal{H}}$, $||u_{gq}||_{L^{2}(V)}$,
$||u_{gq}||_{L^{\infty}(H)}$, $||g||_{\mathcal{H}}$,
$||q||_{\mathcal{Q}}$ and the coerciveness constant $\lambda_1$ of
the bilinear form $a_{1}$.
\end{proposition}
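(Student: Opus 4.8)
The plan is to derive the estimate \eqref{estimacion1bi} by an energy argument applied to the variational equality \eqref{Palfavariacional}, comparing $u_{\alpha gq}$ against the known solution $u_{gq}$ of \eqref{Pvariacional}. First I would set $w_\alpha = u_{\alpha gq} - u_{gq}$, which belongs to $L^2(0,T;V)$ (note that $u_{gq}$ extends to a function in $V$ since $u_{gq}-v_b\in V_0$ and $v_b\in H^1(\Omega)$), and satisfies $w_\alpha(0)=0$. Subtracting the two weak formulations, and using that the Dirichlet condition $u_{gq}=b$ holds on $\Gamma_1$, one gets for all $v\in V$
\[
\langle \dot w_\alpha(t),v\rangle + a_\alpha(w_\alpha(t),v) = -\alpha\int_{\Gamma_1}(u_{gq}(t)-b)\,v\,d\gamma + \Big(\text{terms involving } -\tfrac{\partial u_{gq}}{\partial n}\big|_{\Gamma_1}\Big),
\]
but since $u_{gq}-b=0$ on $\Gamma_1$ the $\alpha$-term drops and what remains is only the normal-derivative contribution of $u_{gq}$ on $\Gamma_1$, which is controlled (via the equation satisfied by $u_{gq}$) by $\|\dot u_{gq}\|_{L^2(V')}$, $\|\nabla u_{gq}\|_{\mathcal H}$, $\|g\|_{\mathcal H}$ and $\|q\|_{\mathcal Q}$. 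Equivalently, and more cleanly, I would test \eqref{Palfavariacional} directly with $v=u_{\alpha gq}(t)-v_b$ or with $v=w_\alpha(t)$ and absorb the boundary term.

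Second, taking $v=w_\alpha(t)$ in the above identity and integrating in time from $0$ to $s$, the term $\int_0^s\langle\dot w_\alpha,w_\alpha\rangle = \tfrac12\|w_\alpha(s)\|_H^2$, the term $\int_0^s a_\alpha(w_\alpha,w_\alpha) = \int_0^s a(w_\alpha,w_\alpha) + \alpha\int_0^s\|w_\alpha\|_{L^2(\Gamma_1)}^2$, and by the coerciveness of $a_1$ on $V$ (constant $\lambda_1$) one has $a(w_\alpha,w_\alpha)+\alpha\int_{\Gamma_1}w_\alpha^2 \ge a(w_\alpha,w_\alpha)+\int_{\Gamma_1}w_\alpha^2 + (\alpha-1)\int_{\Gamma_1}w_\alpha^2 \ge \lambda_1\|w_\alpha\|_V^2 + (\alpha-1)\|w_\alpha\|_{L^2(\Gamma_1)}^2$ for $\alpha>1$. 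The right-hand side is bounded, after a trace inequality and Cauchy–Schwarz with a small parameter, by $\tfrac{\lambda_1}{2}\int_0^s\|w_\alpha\|_V^2 + C\int_0^s(\cdots)$ where the dots are the fixed data norms listed in the statement; Young's inequality and Gronwall (here trivially, since there is no $w_\alpha$ on the right) then yield
\[
\sup_{s\in[0,T]}\|w_\alpha(s)\|_H^2 + \|w_\alpha\|_{L^2(V)}^2 + (\alpha-1)\|w_\alpha\|_{L^\infty(L^2(\Gamma_1))}^2 \le C,
\]
with $C$ independent of $\alpha>1$. Since $u_{gq}$ satisfies its own a priori bounds in $L^\infty(H)$, $L^2(V)$ and $u_{gq}=b$ on $\Gamma_1$, the triangle inequality converts these into the claimed bounds for $u_{\alpha gq}$ itself in $L^\infty(H)$, $L^2(V)$ and for $u_{\alpha gq}-b$ in $L^\infty(L^2(\Gamma_1))$.

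Third, for the $\|\dot u_{\alpha gq}\|_{L^2(V_0')}$ piece I would go back to \eqref{Palfavariacional} restricted to test functions $v\in V_0$: then $a_\alpha(u_{\alpha gq},v)=a(u_{\alpha gq},v)$ because the boundary integral over $\Gamma_1$ vanishes, so $\langle\dot u_{\alpha gq}(t),v\rangle = L(t,v)-a(u_{\alpha gq}(t),v)$ for all $v\in V_0$, and hence $\|\dot u_{\alpha gq}(t)\|_{V_0'} \le \|g(t)\|_H + C\|q(t)\|_{L^2(\Gamma_2)} + \|\nabla u_{\alpha gq}(t)\|_H$; integrating the square in $t$ and using the already established $L^2(V)$ bound gives the $L^2(V_0')$ estimate with $K$ of the stated dependence. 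The main obstacle — and the point requiring care — is the handling of the $\alpha\int_{\Gamma_1}$ boundary term: one must verify that working with the difference $w_\alpha=u_{\alpha gq}-u_{gq}$ genuinely annihilates it (using the Dirichlet trace $u_{gq}=b$), rather than producing an $\alpha$-dependent right-hand side, and one must keep the trace-inequality constant and the coerciveness constant $\lambda_1$ carefully separated so that the absorbed term does not eat the $(\alpha-1)$ factor; everything else is a standard parabolic energy estimate plus the previously established regularity of $u_{gq}$.
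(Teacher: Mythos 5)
Your proof follows essentially the same route as the paper: test \eqref{Palfavariacional} with the difference $u_{\alpha gq}-u_{gq}$, use $u_{gq}\big|_{\Gamma_1}=b$ to annihilate the $\alpha$-boundary term, split $\alpha=1+(\alpha-1)$ so that the coerciveness constant $\lambda_1$ of $a_1$ absorbs one copy of the trace integral, and then recover the $L^2(V_0')$ bound on $\dot u_{\alpha gq}$ from the equation restricted to test functions in $V_0$ (the paper subtracts the two equations to bound $\|\dot u_{\alpha gq}-\dot u_{gq}\|_{L^2(V_0')}$ and concludes by the triangle inequality, which is the same computation). The only caveat is that the time-integrated energy identity yields $(\alpha-1)\|u_{\alpha gq}-b\|^2_{L^2(L^2(\Gamma_1))}\leq C$ rather than the $L^{\infty}(L^2(\Gamma_1))$ bound you assert at the end of your second step; this is the same discrepancy that already exists between the statement of the proposition and the paper's own estimate \eqref{71aaa}.
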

\begin{proof}
Taking $v=u_{\alpha gq}(t)-u_{gq}(t)\in V$ in the variational
equality (\ref{Palfavariacional}), taking into account that
$u_{gq}\big|_{\Gamma_1}=b$, and by using Young's inequality, we have
\begin{equation*}\begin{split}
&\quad\langle \dot{u}_{\alpha gq}(t)-\dot{u}_{gq}(t),u_{\alpha
gq}(t)-u_{gq}(t)\rangle+\frac{\lambda_1}{2}||u_{\alpha
gq}(t)-u_{gq}(t)||^2_V
\\&\quad+(\alpha-1)\int\limits_{\Gamma_1}(u_{\alpha gq}(t)-u_{gq}(t))^2d\gamma
\\& \leq
\frac{2}{\lambda_1}||g(t)||_{H}^2+\frac{2}{\lambda_1}\|\gamma_{0}\|^{2}||q(t)||^2_{Q}
+\frac{2}{\lambda_1}||\nabla u_{gq}(t)||^2_{H}+\frac{2}{\lambda_1}||
\dot{u}_{gq}(t)||^2_{V'}
\end{split}\end{equation*}
where $\gamma_{0}$ is the trace operator on $\Gamma$. Next, integrating between $0$ and $T$, and using that  $u_{\alpha gq}(0)=u_{gq}(0)=v_b$, we obtain
\begin{equation*}\begin{split}
& \frac{1}{2}||u_{\alpha
gq}(T)-u_{gq}(T)||_H^2+\frac{\lambda_1}{2}||u_{\alpha
gq}-u_{gq}||^2_{L^2(V)}+(\alpha-1)||u_{\alpha
gq}-b||^2_{L^2(L^2(\Gamma_1))}
\\& \leq
\frac{2}{\lambda_1}[||g||_{\mathcal{H}}^2+\|\gamma_{0}\|^{2}||q||^2_{\mathcal{Q}}
+||\nabla u_{gq}||^2_{\mathcal{H}}+|| \dot{u}_{gq}||^2_{L^2(V')}]\\
&=\frac{2}{\lambda_1}A,
\end{split}\end{equation*}
where $A=||g||_{\mathcal{H}}^2+\|\gamma_{0}\|^2||q||^2_{\mathcal{Q}}
+||\nabla u_{gq}||^2_{\mathcal{H}}+||\dot{u}_{gq}||^2_{L^2(V')}$.
From here, we deduce
\begin{equation}\label{71aaa}\sqrt{(\alpha-1)}||u_{\alpha gq}-b||_{L^2(L^2(\Gamma_1))}
\leq\sqrt{\frac{2}{\lambda_1}A}, \end{equation}
\begin{equation}\label{71}
||u_{\alpha gq}||_{L^2(V)}\leq
\frac{2}{\lambda_{1}}\sqrt{A}+||u_{gq}||_{L^2(V)}.
\end{equation}
\begin{equation}\label{30aaaa}\begin{split}
||u_{\alpha gq}||_{L^{\infty}(H)}\leq
\frac{2}{\sqrt{\lambda_1}}\sqrt{A}+||u_{gq}||_{L^{\infty}(H)}.\end{split}\end{equation}
Next, taking $v\in V_{0}$ in the variational equality
(\ref{Palfavariacional}) and subtracting with variational equality (\ref{Pvariacional}), we have
\begin{equation*}
( \dot{u}_{\alpha gq}(t)-\dot{u}_{gq}(t), v)_{H}+a(u_{\alpha
gq}(t)-u_{gq}(t),v)=0, \quad\forall v\in V_0.
\end{equation*}
Therefore
\begin{equation}\label{cuenta}
( \dot{u}_{\alpha gq}(t)-\dot{u}_{gq}(t), v)_{H} \leq
||u_{gq}(t)-u_{\alpha gq}(t)||_{V}||v||_{V_0}\quad\forall v\in V_0,
\end{equation}
taking supremum for $v\in V_0$ with $\|v\|_{V_{0}}\leq 1$ and integrating
between $0$ and $T$, we obtain $||\dot{u}_{\alpha gq}-\dot{u}_{gq}||_{L^2(V_0')} \leq
||u_{gq}-u_{\alpha gq}||_{L^2(V)}$ and therefore
\begin{equation}\label{71bbx}
||\dot{u}_{\alpha gq}||_{L^2(V_0')} \leq
\frac{2}{\lambda_{1}}\sqrt{A}+2||u_{gq}||_{L^2(V)}+||\dot{u}_{gq}||_{L^2(V_0')}.
\end{equation}
Finally, from (\ref{71aaa}), (\ref{71}), (\ref{30aaaa}) and (\ref{71bbx}), the thesis holds.
\end{proof}
\begin{proposition}\label{proposicion42bi} For fixed $(g,q)$, if $p_{\alpha gq}$ is the unique solution of the problem (\ref{palphavariacional}), then we have
\begin{equation}\label{estimacion2bi}
    ||\dot{p}_{\alpha gq}||_{L^{2}(V_0')}+||p_{\alpha gq}||_{L^{\infty}(H)}+
    ||p_{\alpha gq}||_{L^{2}(V)}
   +\sqrt{(\alpha-1)}
    ||p_{\alpha gq}||_{L^{2}(L^2(\Gamma_1))}\leq K,
\end{equation}
\newline
for all $\alpha>1$, where $K$ is depending of
$||\dot{p}_{gq}||_{L^{2}(V_0')}$, $||\dot{p}_{gq}||_{L^{2}(V')}$,
$||g||_{\mathcal{H}}$, $||q||_{\mathcal{Q}}$, $||\nabla
p_{gq}||_{\mathcal{H}}$, $||p_{gq}||_{L^{2}(V)}$,
$||p_{gq}||_{L^{\infty}(H)}$, $||z_d||_{\mathcal{H}},$
$||\dot{u}_{gq}||_{L^{2}(V')}$, $||\nabla u_{gq}||_{\mathcal{H}}$,
$||u_{gq}||_{L^{2}(V)}$, $||u_{gq}||_{L^{\infty}(H)}$ and the
coerciveness constant $\lambda_1$.
\end{proposition}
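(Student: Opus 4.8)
The plan is to follow, step by step, the argument of Proposition~\ref{proposicion41bi}, the only structural differences being that the Dirichlet identity $u_{gq}|_{\Gamma_1}=b$ used there is now replaced by $p_{gq}(t)|_{\Gamma_1}=0$ (since $p_{gq}(t)\in V_0$), and that all time integrations must be carried out backwards from $T$, because the adjoint problems (\ref{ecvarbi}) and (\ref{palphavariacional}) carry a terminal condition at $t=T$ instead of an initial one.

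First I would take $v=p_{\alpha gq}(t)-p_{gq}(t)\in V$ as test function in (\ref{palphavariacional}). Splitting $a_\alpha=a_1+(\alpha-1)\int_{\Gamma_1}(\cdot)(\cdot)\,d\gamma$, invoking the coerciveness of $a_1$ on $V$ with constant $\lambda_1$, and using $p_{gq}(t)|_{\Gamma_1}=0$ (so that the boundary contribution reduces to $(\alpha-1)\int_{\Gamma_1}(p_{\alpha gq}(t))^2\,d\gamma\ge0$ and $a_1(p_{gq}(t),v)=a(p_{gq}(t),v)$), and writing $-\langle\dot p_{\alpha gq}(t),v\rangle=-\tfrac12\tfrac{d}{dt}\|v(t)\|_H^2-\langle\dot p_{gq}(t),v\rangle$, one reaches, after absorbing the $\dot p_{gq}$-pairing, the $\nabla p_{gq}$ term and the right-hand side $(u_{\alpha gq}(t)-z_d,v)_H$ of (\ref{palphavariacional}) into $\tfrac{\lambda_1}{2}\|v\|_V^2$ by Young's inequality, an estimate of the form
\begin{equation*}
\begin{split}
-\tfrac12\tfrac{d}{dt}\|p_{\alpha gq}(t)&-p_{gq}(t)\|_H^2+\tfrac{\lambda_1}{2}\|p_{\alpha gq}(t)-p_{gq}(t)\|_V^2+(\alpha-1)\|p_{\alpha gq}(t)\|_{L^2(\Gamma_1)}^2\\
&\le \tfrac{3}{2\lambda_1}\big(\|\dot p_{gq}(t)\|_{V'}^2+\|\nabla p_{gq}(t)\|_H^2+\|u_{\alpha gq}(t)-z_d\|_H^2\big).
\end{split}
\end{equation*}
Integrating over $(s,T)$ and using $p_{\alpha gq}(T)=p_{gq}(T)=0$ to discard the terminal term, then invoking Proposition~\ref{proposicion41bi} to bound $\|u_{\alpha gq}\|_{L^2(H)}$ (hence $\|u_{\alpha gq}-z_d\|_{L^2(H)}$) uniformly in $\alpha>1$, yields uniform bounds for $\sqrt{\alpha-1}\,\|p_{\alpha gq}\|_{L^2(L^2(\Gamma_1))}$, $\|p_{\alpha gq}-p_{gq}\|_{L^2(V)}$ and $\|p_{\alpha gq}-p_{gq}\|_{L^\infty(H)}$, whence the first three summands of (\ref{estimacion2bi}) follow by the triangle inequality; the dependence on $u_{gq}$, $g$, $q$ enters precisely through the constant $K$ of Proposition~\ref{proposicion41bi}.

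For the remaining summand $\|\dot p_{\alpha gq}\|_{L^2(V_0')}$ I would restrict the test functions in (\ref{palphavariacional}) to $v\in V_0$, where $a_\alpha(p_{\alpha gq}(t),v)=a(p_{\alpha gq}(t),v)$, and subtract (\ref{ecvarbi}), obtaining
\begin{equation*}
-\langle\dot p_{\alpha gq}(t)-\dot p_{gq}(t),v\rangle+a(p_{\alpha gq}(t)-p_{gq}(t),v)=(u_{\alpha gq}(t)-u_{gq}(t),v)_H,\qquad\forall v\in V_0.
\end{equation*}
Bounding the right-hand side by $\|u_{\alpha gq}(t)-u_{gq}(t)\|_H\|v\|_{V_0}$ and $a(p_{\alpha gq}(t)-p_{gq}(t),v)$ by $\|p_{\alpha gq}(t)-p_{gq}(t)\|_V\|v\|_{V_0}$, taking the supremum over $\|v\|_{V_0}\le1$ and integrating, I get $\|\dot p_{\alpha gq}-\dot p_{gq}\|_{L^2(V_0')}\le\|p_{\alpha gq}-p_{gq}\|_{L^2(V)}+\|u_{\alpha gq}-u_{gq}\|_{L^2(H)}$, both already controlled (the second again via Proposition~\ref{proposicion41bi}), and hence $\|\dot p_{\alpha gq}\|_{L^2(V_0')}\le\|\dot p_{\alpha gq}-\dot p_{gq}\|_{L^2(V_0')}+\|\dot p_{gq}\|_{L^2(V_0')}$. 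Adding the four estimates gives (\ref{estimacion2bi}).

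The step I expect to be the only genuine obstacle, as opposed to routine bookkeeping, is the treatment of the inhomogeneity $(u_{\alpha gq}(t)-z_d,v)_H$: unlike in Proposition~\ref{proposicion41bi}, where subtracting (\ref{Pvariacional}) left a right-hand side depending only on the fixed data, here it genuinely contains the $\alpha$-dependent state $u_{\alpha gq}$, so the estimate is not self-contained and must be chained to Proposition~\ref{proposicion41bi}; one should also check that the pairing $\langle\dot p_{gq}(t),v\rangle$ is licit for $v\in V$, i.e. that $\dot p_{gq}\in L^2(V')$ and not merely in $L^2(V_0')$, which is why $\|\dot p_{gq}\|_{L^2(V')}$ appears among the constants.
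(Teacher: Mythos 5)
Your proposal is correct and follows exactly the route the paper intends: the paper's own proof of Proposition~\ref{proposicion42bi} consists of the single remark that it is obtained ``with an analogous reasoning to Proposition~\ref{proposicion41bi}, as in [MT]'', and your argument is precisely that analogue, with the right substitutions ($p_{gq}|_{\Gamma_1}=0$ in place of $u_{gq}|_{\Gamma_1}=b$, integration backwards from the terminal condition $p(T)=0$). You also correctly identify the one non-routine point --- that the source term $(u_{\alpha gq}(t)-z_d,v)_H$ is $\alpha$-dependent and must be controlled by chaining to estimate (\ref{estimacion1bi}) --- which is exactly why the constant $K$ in (\ref{estimacion2bi}) is stated to depend on $\|z_d\|_{\mathcal{H}}$ and on the norms of $u_{gq}$, $g$ and $q$.
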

\begin{proof} From the variational equality (\ref{palphavariacional}) and with an analogous reasoning to
Proposition \ref{proposicion41bi}, we obtain the estimation (\ref{estimacion2bi}) as in \cite{MT}.
\end{proof}

\begin{theorem} \label{teorema41bi} For fixed $(g,q)\in \mathcal{H}\times\mathcal{Q}$, when $\alpha\rightarrow\infty$, we obtain:
\begin{itemize}
\item [i)] if $u_{gq}$ and $u_{\alpha gq}$ are the unique solutions to the variational problems (\ref{Pvariacional}) and (\ref{Palfavariacional}) respectively, then $u_{\alpha gq}\rightarrow u_{gq}$ strongly in  $L^2(V)\cap L^{\infty}(H)$
and $\dot{u}_{\alpha gq}\rightarrow \dot{u}_{gq}$  strongly in
$L^2(V_0')$.
\item [ii)] if $p_{gq}$ and $p_{\alpha gq}$ are the unique solutions to the variational problems (\ref{ecvarbi}) and (\ref{palphavariacional}) respectively, then
$p_{\alpha gq}\rightarrow p_{gq}$ strongly in  $L^2(V)\cap L^{\infty}(H)$
and $\dot{p}_{\alpha gq}\rightarrow \dot{p}_{gq}$ strongly in
$L^2(V_0')$.
\end{itemize}
\end{theorem}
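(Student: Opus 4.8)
The plan is to deduce, from the uniform–in–$\alpha$ bounds of Propositions~\ref{proposicion41bi} and~\ref{proposicion42bi}, weak convergence of subsequences, to identify the weak limits through the uniqueness of solutions of \eqref{Pvariacional} and \eqref{ecvarbi}, and then to upgrade weak to strong convergence by an energy identity. First, by Proposition~\ref{proposicion41bi} the family $\{u_{\alpha gq}\}_{\alpha>1}$ is bounded in $L^2(V)\cap L^{\infty}(H)$ and $\{\dot u_{\alpha gq}\}$ is bounded in $L^2(V_0')$, so along a subsequence $u_{\alpha gq}\rightharpoonup\eta$ weakly in $L^2(V)$, weak-$*$ in $L^{\infty}(H)$ and $\dot u_{\alpha gq}\rightharpoonup\dot\eta$ weakly in $L^2(V_0')$. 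The same proposition gives $\|u_{\alpha gq}-b\|_{L^{\infty}(L^2(\Gamma_1))}\le K(\alpha-1)^{-1/2}$, hence $u_{\alpha gq}|_{\Gamma_1}\to b$ strongly in $L^2(L^2(\Gamma_1))$; since the trace operator $L^2(V)\to L^2(L^2(\Gamma_1))$ is linear and continuous, it is weakly continuous, so $\eta|_{\Gamma_1}=b$, i.e. $\eta-v_b\in L^2(V_0)$.

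To identify $\eta$ I would test \eqref{Palfavariacional} against $v\in V_0$: the two boundary integrals carrying $\alpha$ vanish since $v|_{\Gamma_1}=0$, leaving $\langle\dot u_{\alpha gq}(t),v\rangle+a(u_{\alpha gq}(t),v)=L(t,v)$. Pairing this with $\varphi\in C^1([0,T];V_0)$, integrating in $t$ and integrating by parts in the time derivative (legitimate since $\dot u_{\alpha gq}\in L^2(V')$ for each fixed $\alpha$), one passes to the limit with the convergences above; carrying out the same computation with the limit shows in addition $u_{\alpha gq}(T)\rightharpoonup\eta(T)$ in $H$ and $\eta(0)=v_b$. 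Therefore $\eta$ solves \eqref{Pvariacional}, and by uniqueness $\eta=u_{gq}$; since the limit does not depend on the subsequence, the whole family converges weakly to $u_{gq}$.

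For the strong convergence I would compare two energy identities. Taking the shifted test function $v=u_{\alpha gq}(t)-v_b\in V$ in \eqref{Palfavariacional} — so that $v|_{\Gamma_1}=u_{\alpha gq}(t)|_{\Gamma_1}-b$ and the two $\Gamma_1$-integrals combine into $\|u_{\alpha gq}(t)-b\|_{L^2(\Gamma_1)}^2$ — and $v=u_{gq}(t)-v_b\in V_0$ in \eqref{Pvariacional}, and integrating over $(0,T)$ with $u_{\alpha gq}(0)=u_{gq}(0)=v_b$, one obtains
\[
\|\nabla u_{\alpha gq}\|_{\mathcal{H}}^2+\alpha\|u_{\alpha gq}-b\|_{L^2(L^2(\Gamma_1))}^2=-\tfrac12\|u_{\alpha gq}(T)-v_b\|_H^2+(\nabla u_{\alpha gq},\nabla v_b)_{\mathcal{H}}+\int_0^T L(s,u_{\alpha gq}-v_b)\,ds,
\]
together with the same identity without the $\Gamma_1$-term for $u_{gq}$, whose right-hand side equals $\|\nabla u_{gq}\|_{\mathcal{H}}^2$. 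Letting $\alpha\to\infty$, the right-hand side of the displayed identity consists of weakly continuous quantities except for $-\|u_{\alpha gq}(T)-v_b\|_H^2$, whose $\limsup$ is $\le-\|u_{gq}(T)-v_b\|_H^2$ by weak lower semicontinuity; hence $\limsup_{\alpha}\big(\|\nabla u_{\alpha gq}\|_{\mathcal{H}}^2+\alpha\|u_{\alpha gq}-b\|_{L^2(L^2(\Gamma_1))}^2\big)\le\|\nabla u_{gq}\|_{\mathcal{H}}^2$. Since $\nabla u_{\alpha gq}\rightharpoonup\nabla u_{gq}$ in $\mathcal{H}$ forces $\liminf_{\alpha}\|\nabla u_{\alpha gq}\|_{\mathcal{H}}^2\ge\|\nabla u_{gq}\|_{\mathcal{H}}^2$, the two together give $\|\nabla u_{\alpha gq}\|_{\mathcal{H}}\to\|\nabla u_{gq}\|_{\mathcal{H}}$ — so $\nabla u_{\alpha gq}\to\nabla u_{gq}$ strongly in $\mathcal{H}$ — and also $\alpha\|u_{\alpha gq}-b\|_{L^2(L^2(\Gamma_1))}^2\to0$. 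By coercivity of $a_1$, $\lambda_1\|u_{\alpha gq}-u_{gq}\|_{L^2(V)}^2\le\|\nabla u_{\alpha gq}-\nabla u_{gq}\|_{\mathcal{H}}^2+\|u_{\alpha gq}-b\|_{L^2(L^2(\Gamma_1))}^2\to0$, so $u_{\alpha gq}\to u_{gq}$ strongly in $L^2(V)$ and in $L^2(H)$. The $L^{\infty}(H)$ convergence follows from a companion computation: testing \eqref{Palfavariacional} with $v=u_{\alpha gq}(t)-u_{gq}(t)\in V$, writing $u_{\alpha gq}=(u_{\alpha gq}-u_{gq})+u_{gq}$, integrating to $t$ and discarding the nonnegative terms $\int_0^t\|\nabla(u_{\alpha gq}-u_{gq})\|_H^2$ and $\alpha\int_0^t\|u_{\alpha gq}-b\|_{L^2(\Gamma_1)}^2$, one bounds $\tfrac12\|u_{\alpha gq}(t)-u_{gq}(t)\|_H^2$ by a fixed constant times $\|u_{\alpha gq}-u_{gq}\|_{L^2(V)}$ (using $\dot u_{gq}\in L^2(V')$, as in Proposition~\ref{proposicion41bi}), uniformly in $t$. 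Finally $\dot u_{\alpha gq}\to\dot u_{gq}$ in $L^2(V_0')$ is immediate from \eqref{cuenta}, i.e. $\|\dot u_{\alpha gq}-\dot u_{gq}\|_{L^2(V_0')}\le\|u_{\alpha gq}-u_{gq}\|_{L^2(V)}$. Part (ii) is obtained by the identical scheme applied to \eqref{palphavariacional} and \eqref{ecvarbi} with Proposition~\ref{proposicion42bi} in place of Proposition~\ref{proposicion41bi}, the only new element being that the source term $u_{\alpha gq}-z_d$ of \eqref{palphavariacional} converges to $u_{gq}-z_d$ in $L^2(H)$ by part (i).

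The main obstacle is precisely this passage from weak to strong convergence: because $u_{\alpha gq}-u_{gq}$ does not belong to $V_0$ (its trace on $\Gamma_1$ is $u_{\alpha gq}-b\neq0$), one cannot subtract the two variational equalities and test with the difference in the $V_0$-framework, the usual device for such estimates; the shift by $v_b$ together with the $\limsup$/weak–lower–semicontinuity comparison against the energy identity of $u_{gq}$ is what replaces it. A secondary, more technical point is that $\dot u_{\alpha gq}$ is uniformly bounded only in $L^2(V_0')$, not in $L^2(V')$, so the pointwise–in–time statements (the weak convergence of $u_{\alpha gq}(T)$ in $H$ and the $L^{\infty}(H)$ bound) have to be obtained through test functions lying in $V_0$ and through the $L^{\infty}$–in–time trace estimate of Proposition~\ref{proposicion41bi}, rather than via a Gelfand–triple argument on all of $V$.
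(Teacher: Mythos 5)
Your proof is correct, and its overall architecture (uniform bounds from Propositions \ref{proposicion41bi}--\ref{proposicion42bi} $\Rightarrow$ weak subsequential limits $\Rightarrow$ identification of the limit via the trace estimate and uniqueness for (\ref{Pvariacional}) $\Rightarrow$ strong convergence by an energy argument $\Rightarrow$ derivative convergence via (\ref{cuenta})) coincides with the paper's. Where you genuinely diverge is the weak-to-strong upgrade. You assert that one cannot test with the difference $u_{\alpha gq}-u_{gq}$ because it is not in $V_0$, and you therefore go through the shifted test function $u_{\alpha gq}-v_b$, compare the resulting energy identity with that of the limit problem, invoke weak lower semicontinuity of $\|u_{\alpha gq}(T)-v_b\|_H$, and conclude by ``weak convergence plus norm convergence of $\|\nabla u_{\alpha gq}\|_{\mathcal H}$ implies strong convergence.'' The paper does the more direct thing you ruled out: the test space in (\ref{Palfavariacional}) is all of $V$, so $v=u_{\alpha gq}(t)-u_{gq}(t)\in V$ \emph{is} admissible there; since $u_{gq}|_{\Gamma_1}=b$, the two $\alpha$-weighted boundary terms combine into $\alpha\|u_{\alpha gq}-b\|^2_{L^2(\Gamma_1)}$, and after moving the $u_{gq}$-terms to the right one gets
\[
\tfrac12\|u_{\alpha gq}(T)-u_{gq}(T)\|_H^2+\lambda_1\|u_{\alpha gq}-u_{gq}\|^2_{L^2(V)}+(\alpha-1)\|u_{\alpha gq}-b\|^2_{L^2(L^2(\Gamma_1))}\le F(u_{\alpha gq}-u_{gq}),
\]
where $F$ is a fixed bounded linear functional on $L^2(V)$ (built from $L$, $a(u_{gq},\cdot)$ and $\langle\dot u_{gq},\cdot\rangle$, the last requiring $\dot u_{gq}\in L^2(V')$, which the paper also assumes in Proposition \ref{proposicion41bi}); the right side then vanishes in the limit by weak convergence alone. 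This buys a one-line conclusion and delivers the $L^\infty(H)$ and $\Gamma_1$-trace convergences simultaneously, without needing the weak convergence $u_{\alpha gq}(T)\rightharpoonup u_{gq}(T)$ in $H$ that your route must establish separately. Your version is sound and self-contained (and your ``companion computation'' for $L^\infty(H)$ is essentially the paper's inequality integrated to time $t$), but it is the longer road; the only inaccuracy is the claim that the direct test is unavailable.
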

\begin{proof}

(i) For fixed $(g,q)\in \mathcal{H}\times \mathcal{Q}$, we consider
a sequence $\{u_{\alpha_n gq}\} $ in $L^2(V)\cap L^{\infty}(H)$ and
by estimation (\ref{estimacion1bi}), we have that $||u_{\alpha_{n}
gq}||_{L^2(V)}\leq K$ and $||\dot{u}_{\alpha_n gq}||_{L^2(V_0')}$
$\leq K$, therefore, there exists a subsequence
$\{u_{\alpha_{n}gq}\}$ which is weakly convergent to $w_{gq}\in
L^2(V)$ and weakly* in $L^{\infty}(H)$ and there exists a
subsequence $\{\dot{u}_{\alpha_{n}gq}\}$ which is weakly convergent
to $\dot{w}_{gq}\in L^2(V_0')$. Now, from the weak lower
semicontinuity of the norm, we have that $w_{gq}=b$ on $\Gamma_1$
and therefore $w_{gq}-v_{b}\in L^2(V_{0})$.
\newline
Next, taking into account that $w_{gq}$ satisfies the following variational problem
\begin{equation*}
\left\{
\begin{array}{l l}
w_{gq}-v_b \in L^2(V_0), \qquad w_{gq}(0)=v_b\quad\text{and}\quad \dot{w}_{gq}\in L^2(V_0')\\
\text{such that}\quad \langle \dot{w}_{gq}(t),
v\rangle+a(w_{gq}(t),v)=L(t,v), \quad \forall v\in V_0,
\end{array}
\right.
\end{equation*}
and by uniqueness of the solution of the problem
(\ref{Pvariacional}), we have $w_{gq}=u_{gq}$.
\newline Therefore, when $\alpha_{n}\rightarrow\infty$ (called $\alpha\rightarrow\infty$), we get
$$u_{\alpha gq}\rightharpoonup u_{gq}\,\,\text{in}\,\,L^{2}(V),\,\,
u_{\alpha gq}\stackrel{*}{\rightharpoonup}
u_{gq}\,\,\text{in}\,\,L^{\infty}(H)\quad\,\,\text{and}\quad
\dot{u}_{\alpha gq}\rightharpoonup
\dot{u}_{gq}\,\,\text{in}\,\,L^{2}(V_0').$$
\newline
Now, we have
\small{\begin{equation*}\begin{split}
&\frac{1}{2}||u_{\alpha
gq}(T)-u_{gq}(T)||_H^2+\lambda_1||u_{\alpha
gq}-u_{gq}||^2_{L^2(V)}+(\alpha-1)||u_{\alpha
gq}-u_{gq}||^2_{L^2(L^2(\Gamma_1))}
\\&\leq \int\limits_0^T \{
L(t,u_{\alpha gq}(t)-u_{gq}(t))-a(u_{gq}(t),u_{\alpha
gq}(t)-u_{gq}(t))\\& \quad -\langle \dot{u}_{gq}(t),u_{\alpha
gq}(t)-u_{gq}(t)\rangle \} dt
\end{split}\end{equation*}}
and by using the weak convergence of $u_{\alpha gq}$ to $u_{gq}$, we
prove the strong convergence in $L^2(V)$ and the strong convergence
in $L^2(L^2(\Gamma_1))$. Now, from the variational equalities
(\ref{Pvariacional}) and (\ref{Palfavariacional}), we have as in
(\ref{cuenta}), that
\begin{equation*}
( \dot{u}_{\alpha gq}(t)-\dot{u}_{gq}(t), v)_{H} \leq
||u_{gq}(t)-u_{\alpha gq}(t)||_{V}||v||_{V_0},\quad\forall v\in V_0
\end{equation*}
then
\begin{equation*}
||\dot{u}_{\alpha gq}-\dot{u}_{gq}||^2_{L^2(V_0')} \leq
||u_{gq}-u_{\alpha gq}||^2_{L^2(V)}\rightarrow
0,\quad \text{when}\quad \alpha \rightarrow
\infty,
\end{equation*}
and we have that $\dot{u}_{\alpha gq}$ is strongly convergent to
$\dot{u}_{gq}$ in $L^2(V_0')$.

(ii) For fixed $(g,q)\in \mathcal{H} \times\mathcal{Q}$ we prove that there exists a sequence in $L^2(V)\cap L^{\infty}(H)$
 and $\eta_{gq}\in L^2(V)\cap
L^{\infty}(H)$ such that $p_{\alpha_n gq}\rightharpoonup \eta_{gq}$
weakly in $L^2(V)$ and weakly* in $L^{\infty}(H)$ and $\dot{p}_{\alpha_n gq}\rightharpoonup \dot{\eta}_{gq}$
weakly in $L^2(V_0')$. Next, we obtain that $\eta_{gq}$ verifies the variational problem (\ref{ecvarbi}), and by uniqueness of the solution we have that $\eta_{gq}=p_{gq}$.
Here, when $\alpha\rightarrow\infty$, we obtain that
$$p_{\alpha gq}\rightharpoonup p_{gq}\,\,\text{ in}\,\,L^{2}(V),
\quad p_{\alpha gq}\stackrel{*}{\rightharpoonup} p_{gq}\,\,\text{
in}\,\,L^{\infty}(H)\,\,\text{and}\,\,\, \dot{p}_{\alpha
gq}\rightharpoonup \dot{p}_{gq}\,\,\text{in}\,\,L^{2}(V_0').$$
Finally, the strong convergence of $p_{\alpha gq}$ to  $p_{gq}$ in
$L^2(V)\cap L^{\infty}(H)$ and of $\dot{p}_{\alpha gq}$ to
$\dot{p}_{gq}$ in $L^2(V_0')$ is obtained in a similar way that (i).
\end{proof}

Now, in the next theorem we prove the strong convergence of the
optimal controls, the system and the adjoint states of the optimal control problems (\ref{pcd2bi}) to the optimal control, the system and the adjoint states of the optimal control problem (\ref{pcd1bi}), when $\alpha\rightarrow
\infty$.

\begin{theorem}
a) If $u_{\overline{\overline{g}}\,\overline{\overline{q}}}$ and $u_{\alpha\overline{\overline{g}}_{\alpha}\overline{\overline{q}}_{\alpha}}$
 are the unique system states corresponding to the simultaneous optimal control problems (\ref{pcd1bi}) and (\ref{pcd2bi}) respectively, then we get
\begin{equation}\label{bi1}
(i) \lim\limits_{\alpha \rightarrow
\infty}||u_{\alpha\overline{\overline{g}}_{\alpha}\overline{\overline{q}}_{\alpha}}-u_{\overline{\overline{g}}\,\overline{\overline{q}}}||_{L^2(V)}=0,\,
(ii)\lim\limits_{\alpha \rightarrow
\infty}||\dot{u}_{\alpha\overline{\overline{g}}_{\alpha}\overline{\overline{q}}_{\alpha}}-\dot{u}_{\overline{\overline{g}}\,\overline{\overline{q}}}||_{L^2(V'_{0})}=0.
\end{equation}
b) If $p_{\overline{\overline{g}}\,\overline{\overline{q}}}$ and $p_{\alpha\overline{\overline{g}}_{\alpha}\overline{\overline{q}}_{\alpha}}$ are the unique adjoint states corresponding to the simultaneous optimal control problems (\ref{pcd1bi}) and (\ref{pcd2bi}) respectively, then
\begin{equation}\label{bi2}
(i) \lim\limits_{\alpha \rightarrow
\infty}||p_{\alpha\overline{\overline{g}}_{\alpha}\overline{\overline{q}}_{\alpha}}-p_{\overline{\overline{g}}\,\overline{\overline{q}}}||_{L^2(V)}=0,\,
(ii) \lim\limits_{\alpha \rightarrow
\infty}||\dot{p}_{\alpha\overline{\overline{g}}_{\alpha}\overline{\overline{q}}_{\alpha}}-\dot{p}_{\overline{\overline{g}}\,\overline{\overline{q}}}||_{L^2(V'_{0})}=0.
\end{equation}
c) If $(\overline{\overline{g}},\overline{\overline{q}})$ and $(\overline{\overline{g}}_{\alpha},\overline{\overline{q}}_{\alpha})$ are the unique solutions from the
simultaneous distributed-boundary optimal control problems (\ref{pcd1bi}) and (\ref{pcd2bi}) respectively, then
\begin{equation}\label{bi3}
\lim\limits_{\alpha \rightarrow
\infty}||(\overline{\overline{g}}_{\alpha},\overline{\overline{q}}_{\alpha})-(\overline{\overline{g}},\overline{\overline{q}})||_{\mathcal{H}\times\mathcal{Q}}=0.
\end{equation}
\end{theorem}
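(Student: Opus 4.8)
The plan is to run the standard weak-compactness and lower-semicontinuity argument for parameter-dependent optimal control problems, feeding into it the fixed-control convergence of Theorem \ref{teorema41bi} and the $\alpha$-uniform estimates of Propositions \ref{proposicion41bi}--\ref{proposicion42bi}. First I would fix a sequence $\alpha_n\to\infty$. From optimality, $J_{\alpha_n}(\overline{\overline{g}}_{\alpha_n},\overline{\overline{q}}_{\alpha_n})\le J_{\alpha_n}(0,0)=\tfrac12\|u_{\alpha_n00}-z_d\|_{\mathcal H}^2$, and since $\|u_{\alpha_n00}\|_{L^\infty(H)}$ is bounded uniformly in $\alpha_n>1$ by Proposition \ref{proposicion41bi} (case $g=q=0$), the terms $\tfrac{M_1}2\|\overline{\overline{g}}_{\alpha_n}\|_{\mathcal H}^2+\tfrac{M_2}2\|\overline{\overline{q}}_{\alpha_n}\|_{\mathcal Q}^2$ stay bounded; hence, passing to a subsequence, $\overline{\overline{g}}_{\alpha_n}\rightharpoonup f$ in $\mathcal H$ and $\overline{\overline{q}}_{\alpha_n}\rightharpoonup\rho$ in $\mathcal Q$. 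Writing $u_{\overline{\overline{g}}_{\alpha_n}\overline{\overline{q}}_{\alpha_n}}=C(\overline{\overline{g}}_{\alpha_n},\overline{\overline{q}}_{\alpha_n})+u_{00}$ and using the continuity of $C$ (Lemma \ref{lema1bi} i)), the quantities on which the constant $K$ of Proposition \ref{proposicion41bi} depends remain bounded, so $u_{\alpha_n}:=u_{\alpha_n\overline{\overline{g}}_{\alpha_n}\overline{\overline{q}}_{\alpha_n}}$ is bounded in $L^2(V)\cap L^\infty(H)$, $\dot u_{\alpha_n}$ in $L^2(V_0')$, and $\sqrt{\alpha_n-1}\,\|u_{\alpha_n}-b\|_{L^2(L^2(\Gamma_1))}$ is bounded; likewise, by Proposition \ref{proposicion42bi}, for $p_{\alpha_n}:=p_{\alpha_n\overline{\overline{g}}_{\alpha_n}\overline{\overline{q}}_{\alpha_n}}$.

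Next, extracting a further subsequence, $u_{\alpha_n}\rightharpoonup w$ in $L^2(V)$, $u_{\alpha_n}\stackrel{*}{\rightharpoonup} w$ in $L^\infty(H)$, and $\dot u_{\alpha_n}\rightharpoonup\dot w$ in $L^2(V_0')$; the bound on $\sqrt{\alpha_n-1}\,\|u_{\alpha_n}-b\|$ together with weak lower semicontinuity forces $w|_{\Gamma_1}=b$, i.e. $w-v_b\in L^2(V_0)$, exactly as in the proof of Theorem \ref{teorema41bi}. Testing (\ref{Palfavariacional}) only against $v\in V_0$ annihilates the $\alpha_n$-terms, and passing to the weak limit shows that $w$ solves (\ref{Pvariacional}) with controls $(f,\rho)$; by uniqueness $w=u_{f\rho}$. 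In the same way $p_{\alpha_n}\rightharpoonup p_{f\rho}$. Now for any $(g,q)\in\mathcal H\times\mathcal Q$, Theorem \ref{teorema41bi} i) gives $u_{\alpha_n gq}\to u_{gq}$ strongly in $L^2(H)=\mathcal H$, hence $J_{\alpha_n}(g,q)\to J(g,q)$; combining the weak lower semicontinuity of the three quadratic terms of $J_{\alpha_n}(\overline{\overline{g}}_{\alpha_n},\overline{\overline{q}}_{\alpha_n})$ (using $u_{\alpha_n}\rightharpoonup u_{f\rho}$ in $\mathcal H$, $\overline{\overline{g}}_{\alpha_n}\rightharpoonup f$, $\overline{\overline{q}}_{\alpha_n}\rightharpoonup\rho$) with optimality,
\[
J(f,\rho)\le\liminf_n J_{\alpha_n}(\overline{\overline{g}}_{\alpha_n},\overline{\overline{q}}_{\alpha_n})\le\limsup_n J_{\alpha_n}(\overline{\overline{g}}_{\alpha_n},\overline{\overline{q}}_{\alpha_n})\le\lim_n J_{\alpha_n}(g,q)=J(g,q).
\]
Choosing $(g,q)=(\overline{\overline{g}},\overline{\overline{q}})$ and invoking uniqueness of the minimizer (Lemma \ref{lema1bi} vi)) gives $(f,\rho)=(\overline{\overline{g}},\overline{\overline{q}})$; since this limit is independent of the chosen subsequences, the whole family converges weakly to $(\overline{\overline{g}},\overline{\overline{q}})$.

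To upgrade to (\ref{bi3}), note that with $(g,q)=(\overline{\overline{g}},\overline{\overline{q}})$ the chain above collapses, so $J_{\alpha_n}(\overline{\overline{g}}_{\alpha_n},\overline{\overline{q}}_{\alpha_n})\to J(\overline{\overline{g}},\overline{\overline{q}})$; since this common limit equals the sum of the three separate weak-lower-semicontinuity bounds and each term is nonnegative, each of the three terms converges to its bound, in particular $\|\overline{\overline{g}}_{\alpha_n}\|_{\mathcal H}\to\|\overline{\overline{g}}\|_{\mathcal H}$ and $\|\overline{\overline{q}}_{\alpha_n}\|_{\mathcal Q}\to\|\overline{\overline{q}}\|_{\mathcal Q}$. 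Weak convergence together with convergence of the norms in a Hilbert space yields $\overline{\overline{g}}_{\alpha_n}\to\overline{\overline{g}}$ in $\mathcal H$ and $\overline{\overline{q}}_{\alpha_n}\to\overline{\overline{q}}$ in $\mathcal Q$. For (\ref{bi1}) I would split $u_{\alpha_n\overline{\overline{g}}_{\alpha_n}\overline{\overline{q}}_{\alpha_n}}-u_{\overline{\overline{g}}\,\overline{\overline{q}}}=\big(u_{\alpha_n\overline{\overline{g}}_{\alpha_n}\overline{\overline{q}}_{\alpha_n}}-u_{\alpha_n\overline{\overline{g}}\,\overline{\overline{q}}}\big)+\big(u_{\alpha_n\overline{\overline{g}}\,\overline{\overline{q}}}-u_{\overline{\overline{g}}\,\overline{\overline{q}}}\big)$: the second bracket tends to $0$ in $L^2(V)\cap L^\infty(H)$, with derivative in $L^2(V_0')$, by Theorem \ref{teorema41bi} i) applied to the \emph{fixed} control $(\overline{\overline{g}},\overline{\overline{q}})$; the first bracket equals $C_{\alpha_n}(\overline{\overline{g}}_{\alpha_n}-\overline{\overline{g}},\overline{\overline{q}}_{\alpha_n}-\overline{\overline{q}})$ by linearity, and an energy estimate on $C_\alpha(h,\eta)$ — done as in Proposition \ref{proposicion41bi}, using that $\|\nabla v\|_H^2+\alpha\int_{\Gamma_1}v^2\,d\gamma\ge\lambda_1\|v\|_V^2$ for every $\alpha\ge1$ — gives $\|C_\alpha(h,\eta)\|_{L^2(V)}+\|C_\alpha(h,\eta)\|_{L^\infty(H)}+\|\dot C_\alpha(h,\eta)\|_{L^2(V_0')}\le\hat K\|(h,\eta)\|_{\mathcal H\times\mathcal Q}$ with $\hat K$ independent of $\alpha$, so this bracket also tends to $0$ by the control convergence just proved. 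For (\ref{bi2}) the adjoint map $(g,q)\mapsto p_{\alpha gq}$ is affine and $p_{\alpha_n\overline{\overline{g}}_{\alpha_n}\overline{\overline{q}}_{\alpha_n}}-p_{\alpha_n\overline{\overline{g}}\,\overline{\overline{q}}}$ solves (\ref{palphavariacional}) with right-hand side $C_{\alpha_n}(\overline{\overline{g}}_{\alpha_n}-\overline{\overline{g}},\overline{\overline{q}}_{\alpha_n}-\overline{\overline{q}})$, whose $\mathcal H$-norm tends to $0$; together with Theorem \ref{teorema41bi} ii) applied to $p_{\alpha_n\overline{\overline{g}}\,\overline{\overline{q}}}\to p_{\overline{\overline{g}}\,\overline{\overline{q}}}$ and an analogous energy estimate, this yields (\ref{bi2}).

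The step I expect to be the main obstacle is the identification in the second paragraph: passing to the limit in $u_{\alpha_n\overline{\overline{g}}_{\alpha_n}\overline{\overline{q}}_{\alpha_n}}$ when $\alpha_n\to\infty$ \emph{and} the control moves simultaneously. It is handled by first extracting the $\alpha$-uniform bounds (available for $\alpha>1$ precisely because the coercivity constant $\lambda_1$ of $a_1$ bounds $a_\alpha$ from below uniformly), then separating the two limits — weak limit of the controls, weak limit of the states — and closing the loop via uniqueness for (\ref{Pvariacional}); the same uniform coercivity furnishes the $\alpha$-independent operator bound on $C_\alpha$ that makes the decoupling in the last paragraph work.
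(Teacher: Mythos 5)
Your argument is correct and reproduces the paper's overall skeleton (uniform bounds from $J_{\alpha}(\overline{\overline{g}}_{\alpha},\overline{\overline{q}}_{\alpha})\leq J_{\alpha}(0,0)$, weak subsequential limits $f,\delta,\mu,\rho$, identification of the limit, then upgrade to strong convergence via convergence of norms), but it deviates from the paper at two genuine points. First, to identify the weak limit of the controls the paper passes to the limit in the first order optimality condition $(M_1\overline{\overline{g}}_{\alpha}+p_{\alpha\overline{\overline{g}}_{\alpha}\overline{\overline{q}}_{\alpha}},h)_{\mathcal H}+(M_2\overline{\overline{q}}_{\alpha}-p_{\alpha\overline{\overline{g}}_{\alpha}\overline{\overline{q}}_{\alpha}},\eta)_{\mathcal Q}=0$ and invokes uniqueness of the solution of the optimality system (so it needs Lemma \ref{lema32bi} and the weak convergence of the adjoint states already at this stage); you instead run the $\liminf$--$\limsup$ chain with the as-yet-unidentified limit $(f,\rho)$ and conclude from uniqueness of the minimizer of $J$. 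Your route is slightly more economical (it makes the paper's Step 2 essentially a by-product of its Step 3 and does not use the adjoint state to identify the limit), while the paper's route has the advantage of identifying $\rho=p_{f\delta}$ along the way. Second, for (\ref{bi1}) and (\ref{bi2}) the paper performs a direct energy estimate on $z_{\alpha}=u_{\alpha\overline{\overline{g}}_{\alpha}\overline{\overline{q}}_{\alpha}}-u_{\overline{\overline{g}}\,\overline{\overline{q}}}$, bounding $\lambda_1\|z_{\alpha}\|_{L^2(V)}^2$ by a functional of $z_{\alpha}$ that vanishes by weak convergence of $z_{\alpha}$ and strong convergence of the controls; you instead split off the fixed-control term $u_{\alpha\overline{\overline{g}}\,\overline{\overline{q}}}-u_{\overline{\overline{g}}\,\overline{\overline{q}}}$ (handled by Theorem \ref{teorema41bi}) and control the remainder $C_{\alpha}(\overline{\overline{g}}_{\alpha}-\overline{\overline{g}},\overline{\overline{q}}_{\alpha}-\overline{\overline{q}})$ by an $\alpha$-uniform operator bound coming from the uniform coercivity $a(v,v)+\alpha\int_{\Gamma_1}v^2\,d\gamma\geq\lambda_1\|v\|_V^2$ for $\alpha\geq1$. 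Both decompositions are valid; yours isolates cleanly where the strong control convergence enters and where the $\alpha\to\infty$ limit enters, at the cost of having to state and prove the uniform bound on $C_{\alpha}$ explicitly (which the paper never needs in this form).
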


\begin{proof}
We will do the proof in three steps.

\textbf{Step 1.} From the estimation (\ref{estimacion1bi}) for
$u_{\alpha gq}$ with $g=q=0$, there exists a constant $K_{1}>0$ such
that $||u_{\alpha 00}||_{\mathcal{H}}\leq ||u_{\alpha
00}||_{L^2(V)}\leq K_{1}$, $\forall \alpha>1$. From the definition
of $J_{\alpha}$ and since
$J_{\alpha}(\overline{\overline{g}}_{\alpha},\overline{\overline{q}}_{\alpha})\leq
J_{\alpha}(0,0)$, we have:
\begin{equation}\frac{1}{2}||u_{\alpha\overline{\overline{g}}_{\alpha}\overline{\overline{q}}_{\alpha}}-z_d||^2_{\mathcal{H}}
+\frac{M_{1}}{2}||\overline{\overline{g}}_{\alpha}||^2_{\mathcal{H}}+\frac{M_{2}}{2}||\overline{\overline{q}}_{\alpha}||^2_{\mathcal{Q}}\leq
\frac{1}{2}||u_{\alpha
00}-z_d||^2_{\mathcal{H}}.\nonumber\end{equation} Therefore, there
exist positive constants $K_{2}$, $K_{3}$ and $K_{4}$ such that
$$||u_{\alpha\overline{\overline{g}}_{\alpha}\overline{\overline{q}}_{\alpha}}||_{\mathcal{H}}\leq K_2 , \quad ||\overline{\overline{g}}_{\alpha}||_{\mathcal{H}}
\leq K_{3} \quad
\text{and}\quad||\overline{\overline{q}}_{\alpha}||_{\mathcal{Q}}\leq
K_{4}.$$ Now, by estimation (\ref{estimacion1bi}) in Proposition
\ref{proposicion41bi}, we obtain that, for all $\alpha>1$ there
exists $K_{5}>0$ such that
\begin{equation}\label{d5}
||u_{\alpha\overline{\overline{g}}_{\alpha}\overline{\overline{q}}_{\alpha}}||_{L^{2}(V)}+
||\dot{u}_{\alpha\overline{\overline{g}}_{\alpha}\overline{\overline{q}}_{\alpha}}||_{L^2(V_0')}+\sqrt{(\alpha-1)}
    ||u_{\alpha\overline{\overline{g}}_{\alpha}\overline{\overline{q}}_{\alpha}}-b||_{L^{2}(L^2(\Gamma_1))}\leq K_5
\end{equation}
and by estimation (\ref{estimacion2bi}) in Proposition
\ref{proposicion42bi}, there exists a positive constant $K_{6}$ such
that
\begin{equation}\label{d6}
||p_{\alpha\overline{\overline{g}}_{\alpha}\overline{\overline{q}}_{\alpha}}||_{L^{2}(V)}+
||\dot{p}_{\alpha\overline{\overline{g}}_{\alpha}\overline{\overline{q}}_{\alpha}}||_{L^2(V_0')}+\sqrt{(\alpha-1)}
    ||p_{\alpha\overline{\overline{g}}_{\alpha}\overline{\overline{q}}_{\alpha}}||_{L^{2}(L^2(\Gamma_1))}\leq
    K_6.
\end{equation}
From the previous estimations, we have that there exist $f\in
\mathcal{H}$, $\delta\in \mathcal{Q}$, $\mu\in L^2(V)$,
$\dot{\mu}\in L^2(V_0')$, $\rho\in L^2(V)$ and $\dot{\rho}\in
L^2(V_0')$ such that
$$\overline{\overline{g}}_{\alpha}\rightharpoonup f\in \mathcal{H},\quad \overline{\overline{q}}_{\alpha}\rightharpoonup \delta\in \mathcal{Q}$$
$$u_{\alpha\overline{\overline{g}}_{\alpha}\overline{\overline{q}}_{\alpha}}\rightharpoonup \mu\in L^2(V),\quad
\dot{u}_{\alpha\overline{\overline{g}}_{\alpha}\overline{\overline{q}}_{\alpha}}\rightharpoonup
\dot{\mu}\in L^2(V'_{0}),$$
$$p_{\alpha\overline{\overline{g}}_{\alpha}\overline{\overline{q}}_{\alpha}}\rightharpoonup \rho\in L^2(V),\quad
\dot{p}_{\alpha\overline{\overline{g}}_{\alpha}\overline{\overline{q}}_{\alpha}}\rightharpoonup
\dot{\rho}\in L^2(V'_{0}).$$

\textbf{Step 2.}  Taking into account the weak convergence of
$u_{\alpha\overline{\overline{g}}_{\alpha}\overline{\overline{q}}_{\alpha}}$
to $\mu$ in $L^2(V)$ and the estimation (\ref{d5}) we obtain in
similar way to Theorem \ref{teorema41bi} (i), that $\mu=u_{f\delta}$. Moreover,
for the adjoint state, we have that
$p_{\alpha\overline{\overline{g}}_{\alpha}\overline{\overline{q}}_{\alpha}}$
is weakly convergent to $\rho$ in $L^2(V)$ and from estimation
(\ref{d6}) we obtain in similar way that Theorem \ref{teorema41bi} (ii), that
$\rho=p_{f\delta}$. Therefore, we have
\[
u_{\alpha\overline{\overline{g}}_{\alpha}\overline{\overline{q}}_{\alpha}}\rightharpoonup
u_{f\delta}\quad\text{in}\quad L^2(V)\quad \text{and}\quad
p_{\alpha\overline{\overline{g}}_{\alpha}\overline{\overline{q}}_{\alpha}}\rightharpoonup
p_{f\delta}\quad\text{in}\quad L^2(V).
\]
Now, the optimality condition for the problem (\ref{pcd2bi}) is
given by
\[
(M_{1}
\overline{\overline{g}}_{\alpha}+p_{\alpha\overline{\overline{g}}_{\alpha}\overline{\overline{q}}_{\alpha}},h)_{\mathcal{H}}+
(M_{2}\overline{\overline{q}}_{\alpha}-p_{\alpha\overline{\overline{g}}_{\alpha}\overline{\overline{q}}_{\alpha}},\eta)_{\mathcal{Q}}=0
\quad \forall (h,\eta) \in \mathcal{H}\times \mathcal{Q}
\]
and taking into account that
\[
p_{\alpha\overline{\overline{g}}_{\alpha}\overline{\overline{q}}_{\alpha}}\rightharpoonup
p_{f\delta}\text{ in } L^2(V),\quad
\overline{\overline{g}}_{\alpha}\rightharpoonup f\in
\mathcal{H},\quad \overline{\overline{q}}_{\alpha}\rightharpoonup
\delta\in \mathcal{Q}
\]
we obtain
\[
(M_{1} f+p_{f\delta},h)_{\mathcal{H}}+
(M_{2}\delta-p_{f\delta},\eta)_{\mathcal{Q}}=0 \quad \forall
(h,\eta) \in \mathcal{H}\times \mathcal{Q}
\]
and by uniqueness of the optimal control we deduce that
$f=\overline{\overline{g}}$ and $\delta=\overline{\overline{q}}$.
Therefore
$u_{f\delta}=u_{\overline{\overline{g}}\,\overline{\overline{q}}}$,
$p_{f\delta}=p_{\overline{\overline{g}}\,\overline{\overline{q}}}$,
$\dot{u}_{f\delta}=\dot{u}_{\overline{\overline{g}}\,\overline{\overline{q}}}$
and
$\dot{p}_{f\delta}=\dot{u}_{\overline{\overline{g}}\,\overline{\overline{q}}}$.

\textbf{Step 3.} We have, for all $(g,q) \in \mathcal{H}\times
\mathcal{Q}$
\begin{equation*}\begin{split}
J(\overline{\overline{g}},\overline{\overline{q}})&=\frac{1}{2}||u_{\overline{\overline{g}}\,\overline{\overline{q}}}-z_d||_{\mathcal{H}}^2
+\frac{M_1}{2}||\overline{\overline{g}}||^2_{\mathcal{H}}+\frac{M_2}{2}||\overline{\overline{q}}||^2_{\mathcal{Q}}
\\&\leq\liminf\limits_{\alpha\rightarrow\infty}\left[\frac{1}{2}||u_{\alpha\overline{\overline{g}}_{\alpha}\overline{\overline{q}}_{\alpha}}-z_d||_{\mathcal{H}}^2
+\frac{M_1}{2}||\overline{\overline{g}}_{\alpha}||^2_{\mathcal{H}}+\frac{M_2}{2}||\overline{\overline{q}}_{\alpha}||^2_{\mathcal{Q}}\right]
\\&\leq\limsup\limits_{\alpha\rightarrow\infty}\left[\frac{1}{2}||u_{\alpha\overline{\overline{g}}_{\alpha}\overline{\overline{q}}_{\alpha}}-z_d||_{\mathcal{H}}^2+
\frac{M_1}{2}||\overline{\overline{g}}_{\alpha}||^2_{\mathcal{H}}+\frac{M_2}{2}||\overline{\overline{q}}_{\alpha}||^2_{\mathcal{Q}}\right]\\&
\leq\limsup\limits_{\alpha\rightarrow\infty}
J_{\alpha}(g,q)\\&=\lim\limits_{\alpha\rightarrow\infty}\left[\frac{1}{2}||u_{\alpha
gq}-z_d||_{\mathcal{H}}^2+\frac{M_1}{2}||g||^2_{\mathcal{H}}+\frac{M_2}{2}||q||^2_{\mathcal{Q}}\right]\\&=\frac{1}{2}||u_{gq}-z_d||_{\mathcal{H}}^2
+\frac{M_1}{2}||g||^2_{\mathcal{H}}+\frac{M_2}{2}||q||^2_{\mathcal{Q}}=J(g,q).
\end{split}\end{equation*}
By taking infimum on $(g,q)$, all the above inequalities become
equalities and therefore we get
\begin{equation*}\begin{split}
&\,\quad\lim\limits_{\alpha\rightarrow\infty}\left[\frac{1}{2}||u_{\alpha\overline{\overline{g}}_{\alpha}\overline{\overline{q}}_{\alpha}}-z_d||_{\mathcal{H}}^2
+\frac{M_1}{2}||\overline{\overline{g}}_{\alpha}||^2_{\mathcal{H}}+\frac{M_2}{2}||\overline{\overline{q}}_{\alpha}||^2_{\mathcal{Q}}\right]
\\&=\frac{1}{2}||u_{\overline{\overline{g}}\,\overline{\overline{q}}}-z_d||_{\mathcal{H}}^2+\frac{M_1}{2}||\overline{\overline{g}}||^2_{\mathcal{H}}+\frac{M_2}{2}||
\overline{\overline{q}}||^2_{\mathcal{Q}},
\end{split}\end{equation*}
that is
\begin{equation*}\begin{split}
&\quad\lim\limits_{\alpha\rightarrow\infty}||(\sqrt{M_2}
\overline{\overline{q}}_{\alpha},\sqrt{M_1}\overline{\overline{g}}_{\alpha},u_{\alpha\overline{\overline{g}}_{\alpha}\overline{\overline{q}}_{\alpha}}-z_d)||_{\mathcal{Q}\times
\mathcal{H}\times \mathcal{H}}^2\\&=||(\sqrt{M_2}
\overline{\overline{q}},\sqrt{M_1}\overline{\overline{g}},u_{\overline{\overline{g}}\,\overline{\overline{q}}}-z_d)||^2_{\mathcal{Q}\times
\mathcal{H}\times \mathcal{H}}.
\end{split}\end{equation*}
The previous equality, the convergence
$\overline{\overline{q}}_{\alpha}\rightharpoonup
\overline{\overline{q}}$ in $\mathcal{Q}$,
$\overline{\overline{g}}_{\alpha}\rightharpoonup
\overline{\overline{g}}$ in $\mathcal{H}$ and
$u_{\alpha\overline{\overline{g}}_{\alpha}\overline{\overline{q}}_{\alpha}}\rightharpoonup
u_{\overline{\overline{g}}\,\overline{\overline{q}}}$ in $L^2(V)$,
imply that
$(\overline{\overline{q}}_{\alpha},\overline{\overline{g}}_{\alpha},u_{\alpha\overline{\overline{g}}_{\alpha}\overline{\overline{q}}_{\alpha}})
\rightarrow(\overline{\overline{q}},\overline{\overline{g}},u_{\overline{\overline{g}}\,\overline{\overline{q}}})$,
when $\alpha\rightarrow\infty$, then (\ref{bi3}) holds. Finally, if
we take
$v=u_{\alpha\overline{\overline{g}}_{\alpha}\overline{\overline{q}}_{\alpha}}(t)-u_{\overline{\overline{g}}\,\overline{\overline{q}}}(t)\in
V$ in (\ref{Palfavariacional}) for
$u_{\alpha\overline{\overline{g}}_{\alpha}\overline{\overline{q}}_{\alpha}}$,
we have
\begin{equation*}\begin{split}
&\quad\langle
\dot{u}_{\alpha\overline{\overline{g}}_{\alpha}\overline{\overline{q}}_{\alpha}}(t)-\dot{u}_{\overline{\overline{g}}\,\overline{\overline{q}}}(t),
u_{\alpha\overline{\overline{g}}_{\alpha}\overline{\overline{q}}_{\alpha}}(t)-u_{\overline{\overline{g}}\,\overline{\overline{q}}}(t)\rangle
+\lambda_1||u_{\alpha\overline{\overline{g}}_{\alpha}\overline{\overline{q}}_{\alpha}}(t)-u_{\overline{\overline{g}}\,\overline{\overline{q}}}(t)||_{V}^2
\\&\quad+(\alpha-1)\int\limits_{\Gamma_1}(u_{\alpha\overline{\overline{g}}_{\alpha}\overline{\overline{q}}_{\alpha}}(t)-u_{\overline{\overline{g}}\,
\overline{\overline{q}}}(t))^2d\gamma
\\&\leq (\overline{\overline{g}}_{\alpha}(t)-\dot{u}_{\overline{\overline{g}}\,\overline{\overline{q}}}(t),u_{\alpha\overline{\overline{g}}_{\alpha}
\overline{\overline{q}}_{\alpha}}(t)-u_{\overline{\overline{g}}\,\overline{\overline{q}}}(t))_H-(\overline{\overline{q}}_{\alpha}(t),
u_{\alpha\overline{\overline{g}}_{\alpha}\overline{\overline{q}}_{\alpha}}(t)-u_{\overline{\overline{g}}\,\overline{\overline{q}}}(t))_Q
\\&\quad-a(u_{\overline{\overline{g}}\,\overline{\overline{q}}}(t),u_{\alpha\overline{\overline{g}}_{\alpha}\overline{\overline{q}}_{\alpha}}(t)
-u_{\overline{\overline{g}}\,\overline{\overline{q}}}(t)).
\end{split}\end{equation*}
If we call
$z_{\alpha}=u_{\alpha\overline{\overline{g}}_{\alpha}\overline{\overline{q}}_{\alpha}}-u_{\overline{\overline{g}}\,\overline{\overline{q}}}$, from the previous equality, we obtain
\begin{equation*} \lambda_1||z_{\alpha}(t)||_{V}^2
\leq
(\overline{\overline{g}}_{\alpha}(t)-\dot{u}_{\overline{\overline{g}}\,\overline{\overline{q}}}(t),z_{\alpha}(t))_H-(\overline{\overline{q}}_{\alpha}(t),z_{\alpha}(t))_Q-a(u_{\overline{\overline{g}}\,\overline{\overline{q}}}(t),z_{\alpha}(t)),
\end{equation*}
and integrating between 0 and $T$, we have
\small{\begin{equation*}\begin{split}
\lambda_1||z_{\alpha}||_{L^2(V)}^2 &
=\lambda_1\int\limits_0^T||z_{\alpha}(t)||_{V}^2dt
\\&\leq\int\limits_0^T\left[ (\overline{\overline{g}}_{\alpha}(t)-\dot{u}_{\overline{\overline{g}}\,
\overline{\overline{q}}}(t),z_{\alpha}(t))_H-(\overline{\overline{q}}_{\alpha}(t),z_{\alpha}(t))_Q-a(u_{\overline{\overline{g}}\,
\overline{\overline{q}}}(t),z_{\alpha}(t))\right]dt.
\end{split}\end{equation*}}
Since $z_{\alpha}\rightharpoonup 0$ weakly in $L^2(V)$,
$\overline{\overline{q}}_{\alpha}\rightarrow
\overline{\overline{q}}$ strongly in $\mathcal{Q}$ and
$\overline{\overline{g}}_{\alpha}\rightarrow
\overline{\overline{g}}$ strongly in $\mathcal{H}$ we obtain, when
$\alpha\rightarrow\infty$
$$\int\limits_0^T\left[
(\overline{\overline{g}}_{\alpha}(t)-\dot{u}_{\overline{\overline{g}}\,\overline{\overline{q}}}(t),z_{\alpha}(t))_H-(\overline{\overline{q}}_{\alpha}(t),z_{\alpha}(t))_Q-a(u_{\overline{\overline{g}}\,\overline{\overline{q}}}(t),z_{\alpha}(t))\right]dt\rightarrow
0,$$ then (\ref{bi1} i) holds.
\newline From the variational equalities (\ref{Pvariacional}) and
(\ref{Palfavariacional}), we have
\begin{equation*}\langle \dot{z}_{\alpha}(t),
v\rangle+a(z_{\alpha}(t),v)=(\overline{\overline{g}}_{\alpha}(t)-\overline{\overline{g}}(t),v)_H+
(\overline{\overline{q}}(t)-\overline{\overline{q}}_{\alpha}(t),v)_Q,
\quad \forall v\in V_0,
\end{equation*}
and therefore there exists a positive constant $K_{7}$ such that
\begin{equation*}|| \dot{z}_{\alpha}||^2_{L^2(V_0')}\leq K_{7}\left[|| z_{\alpha}||^2_{L^2(V)}+||\overline{\overline{g}}_{\alpha}-\overline{\overline{g}}||
^2_{\mathcal{H}}+||\overline{\overline{q}}-\overline{\overline{q}}_{\alpha}||^2_{\mathcal{Q}}\right].
\end{equation*}
Since  $\overline{\overline{q}}_{\alpha}\rightarrow
\overline{\overline{q}}$ strongly in $\mathcal{Q}$,
$\overline{\overline{g}}_{\alpha}\rightarrow
\overline{\overline{g}}$ strongly in $\mathcal{H}$ and
$u_{\alpha\overline{\overline{g}}_{\alpha}\overline{\overline{q}}_{\alpha}}\rightarrow
u_{\overline{\overline{g}}\,\overline{\overline{q}}}$ strongly in
$L^2(V)$ when $\alpha\rightarrow\infty$, we can say that
$\dot{z}_{\alpha}\rightarrow 0$ strongly in $L^2(V_0')$, that is
$\dot{u}_{\alpha\overline{\overline{g}}_{\alpha}\overline{\overline{q}}_{\alpha}}\rightarrow
\dot{u}_{\overline{\overline{g}}\,\overline{\overline{q}}}$ strongly
in $L^2(V_0')$, then (\ref{bi1} ii) holds.
\newline In similar way, we prove that $(p_{\alpha\overline{\overline{g}}_{\alpha}\overline{\overline{q}}_{\alpha}},
\dot{p}_{\alpha\overline{\overline{g}}_{\alpha}\overline{\overline{q}}_{\alpha}})\rightarrow
(p_{\overline{\overline{g}}\,
\overline{\overline{q}}},\dot{p}_{\overline{\overline{g}}\,\overline{\overline{q}}})$
strongly in $L^2(V)\times L^2(V_0')$, when $\alpha\rightarrow
\infty$.
\end{proof}

\section{Estimations  between the optimal controls}

In this Section, we study the relation between the solutions of the
distributed optimal control problems given in \cite{MT} with the
solutions of the simultaneous distributed boundary optimal control
problems (\ref{pcd1bi}) and (\ref{pcd2bi}). Moreover, we give a
characterization of the solutions of these problems by using the
fixed point theory.

\subsection{Estimations with respect to the problem $P$}

We consider the distributed optimal control problem
\begin{equation}\label{41}
\text{find}\quad \overline{g}\in\mathcal{H} \quad\text{such that}\quad J_1(\overline{g})=\min\limits_{g\in\mathcal{H}}J_1(g)\qquad \text{for fixed}\quad q\in\mathcal{Q},
\end{equation}
where $J_1$ is the  cost functional defined in \cite{MT} plus the constant $\frac{M_2}{2}||q||^2_{\mathcal{Q}}$, that is, $J_1:\mathcal{H}\rightarrow \mathbb{R}^+_0$  is given by
\begin{equation*}
J_1(g)=\frac{1}{2}||u_g-z_d||^2_{\mathcal{H}}+\frac{M_1}{2}||g||^2_{\mathcal{H}} +\frac{M_2}{2}||q||^2_{\mathcal{Q}}    \qquad ( \text{fixed} \,\,q\in \mathcal{Q}),
\end{equation*}
where $u_g$ is the unique solution of the problem (\ref{Pvariacional}) for fixed $q$.
\begin{remark} The functional $J$ defined in (\ref{pcd1bi}) and the functional $J_1$ previously given, satisfy the following elemental estimation
\begin{equation*}
J(\overline{\overline{g}},\overline{\overline{q}})\leq J_1(\overline{g}),\quad \forall q\in\mathcal{Q}.
\end{equation*}\end{remark}
In the following theorem we obtain estimations between the solution of the distributed optimal control problem (\ref{41}) and the first  component of the solution of the simultaneous distributed-boundary optimal control problem (\ref{pcd1bi}).
\begin{theorem}
If
$(\overline{\overline{g}},\overline{\overline{q}})\in\mathcal{H}\times\mathcal{Q}$
is the unique solution of the distributed-boundary optimal control
problem (\ref{pcd1bi}),  $\overline{g}$ is the unique solution of
the optimal control problem (\ref{41}), then
\begin{equation}\label{450}
||\overline{g}-\overline{\overline{g}}||_{\mathcal{H}}\leq\frac{1}{\lambda_0 M_1}||u_{\overline{\overline{g}}\,\overline{\overline{q}}}-u_{\overline{g}\,\overline{\overline{q}}}||_{\mathcal{H}}.
\end{equation}
\end{theorem}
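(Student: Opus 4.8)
The plan is to subtract the first-order optimality conditions of the two problems and bound the resulting difference of adjoint states by an energy estimate. In problem (\ref{41}) we take the fixed control to be $q=\overline{\overline{q}}$, so that its state is $u_{\overline{g}\,\overline{\overline{q}}}$ and its adjoint state $p_{\overline{g}\,\overline{\overline{q}}}$ is the solution of (\ref{ecvarbi}) for $(g,q)=(\overline{g},\overline{\overline{q}})$; since the constant $\frac{M_2}{2}||q||^2_{\mathcal{Q}}$ added to $J_1$ does not affect its derivative, the optimality condition for (\ref{41}) is, exactly as in \cite{MT} (equivalently, as in Lemma \ref{lema22bi}(iii) keeping $q$ fixed), $M_1\overline{g}+p_{\overline{g}\,\overline{\overline{q}}}=0$ in $\mathcal{H}$. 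On the other hand, Lemma \ref{lema22bi}(iv) with $\eta=0$ gives $M_1\overline{\overline{g}}+p_{\overline{\overline{g}}\,\overline{\overline{q}}}=0$ in $\mathcal{H}$. Subtracting and setting $\phi:=p_{\overline{\overline{g}}\,\overline{\overline{q}}}-p_{\overline{g}\,\overline{\overline{q}}}$, we obtain $M_1(\overline{g}-\overline{\overline{g}})=\phi$, hence $M_1||\overline{g}-\overline{\overline{g}}||_{\mathcal{H}}=||\phi||_{\mathcal{H}}$, and it only remains to estimate $||\phi||_{\mathcal{H}}$.

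By linearity of (\ref{ecvarbi}) (the datum $z_d$ cancels), $\phi\in L^2(V_0)$, $\dot\phi\in L^2(V_0')$, $\phi(T)=0$, and
\[
-\langle\dot\phi(t),v\rangle+a(\phi(t),v)=(u_{\overline{\overline{g}}\,\overline{\overline{q}}}(t)-u_{\overline{g}\,\overline{\overline{q}}}(t),v)_H,\qquad\forall v\in V_0.
\]
I would then take $v=\phi(t)\in V_0$, use $\langle\dot\phi(t),\phi(t)\rangle=\frac{1}{2}\frac{d}{dt}||\phi(t)||^2_H$, integrate over $[0,T]$ and drop the nonnegative term $\frac{1}{2}||\phi(0)||^2_H$ produced by the condition $\phi(T)=0$; then, using the coerciveness $a(\phi(t),\phi(t))\geq\lambda_0||\phi(t)||^2_V$, the Cauchy-Schwarz inequality in space and in time, and $||\cdot||_H\leq||\cdot||_V$, one arrives at $\lambda_0||\phi||^2_{L^2(V)}\leq||u_{\overline{\overline{g}}\,\overline{\overline{q}}}-u_{\overline{g}\,\overline{\overline{q}}}||_{\mathcal{H}}\,||\phi||_{L^2(V)}$, i.e. $||\phi||_{L^2(V)}\leq\frac{1}{\lambda_0}||u_{\overline{\overline{g}}\,\overline{\overline{q}}}-u_{\overline{g}\,\overline{\overline{q}}}||_{\mathcal{H}}$. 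Since $||\phi||_{\mathcal{H}}=||\phi||_{L^2(H)}\leq||\phi||_{L^2(V)}$, combining with $M_1||\overline{g}-\overline{\overline{g}}||_{\mathcal{H}}=||\phi||_{\mathcal{H}}$ and dividing by $M_1>0$ yields (\ref{450}).

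I do not expect a real obstacle here: the argument is a standard linearized duality/energy estimate. The two points that require a little care are writing the two Euler equations as consistent identities in $\mathcal{H}$ (isolating the distributed component by testing with $\eta=0$ in the simultaneous problem, and noting that the extra constant in $J_1$ is inert) and the sign bookkeeping in the energy estimate for the \emph{backward} adjoint equation, where it is precisely the terminal condition $\phi(T)=0$ that makes the time-boundary contribution $\frac{1}{2}||\phi(0)||^2_H$ have the favorable sign and thus droppable.
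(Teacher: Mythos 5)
Your argument is correct and follows essentially the same route as the paper: isolate the distributed Euler equations (the paper tests the two optimality conditions with $h=\overline{\overline{g}}-\overline{g}$ and $h=\overline{g}-\overline{\overline{g}}$ and adds, which via Cauchy--Schwarz gives the same bound $M_1||\overline{g}-\overline{\overline{g}}||_{\mathcal{H}}\leq||p_{\overline{g}\,\overline{\overline{q}}}-p_{\overline{\overline{g}}\,\overline{\overline{q}}}||_{L^2(V)}$ that you obtain, slightly more directly, from the pointwise identities), and then run the identical backward energy estimate on the difference of adjoint states using $\phi(T)=0$ and the coercivity of $a$ on $V_0$. No gap.
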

\begin{proof}
From the optimality condition for $\overline{g}$, with $q=\overline{\overline{q}}$, we have $(M_1\overline{g}+p_{\overline{g}\,\overline{\overline{q}}},h)_{\mathcal{H}}=0, \forall h\in\mathcal{H}$,
and taking $h=\overline{\overline{g}}-\overline{g}$, we obtain \begin{equation}\label{47}
(M_1\overline{g}+p_{\overline{g}\,\overline{\overline{q}}},\overline{\overline{g}}-\overline{g})_{\mathcal{H}}=0.
\end{equation}
On the other hand, if we take $\eta=0\in\mathcal{Q}$ in the optimality condition for $(\overline{\overline{g}},\overline{\overline{q}})$ we have $(M_1\overline{\overline{g}}+p_{\overline{\overline{g}}\,\overline{\overline{q}}},h)_{\mathcal{H}}=0, \forall h\in\mathcal{H}$, next, taking  $h=\overline{g}-\overline{\overline{g}}$, we obtain
\begin{equation}\label{48}
(M_1\overline{\overline{g}}+p_{\overline{\overline{g}}\,\overline{\overline{q}}},\overline{g}-\overline{\overline{g}})_{\mathcal{H}}=0.
\end{equation}
By adding (\ref{47}) and (\ref{48}), we have
$\left(M_1(\overline{g}-\overline{\overline{g}})+p_{\overline{g}\,\overline{\overline{q}}}-p_{\overline{\overline{g}}\,\overline{\overline{q}}},\overline{\overline{g}}-\overline{g}\right)_{\mathcal{H}}=0$. Here, we deduce that
\begin{equation*}
||\overline{\overline{g}}-\overline{g}||_{\mathcal{H}} \leq \frac{1}{M_1}||p_{\overline{g}\,\overline{\overline{q}}}-p_{\overline{\overline{g}}\,\overline{\overline{q}}}||_{L^2(V)}.
\end{equation*}
Next, by using the variational equality (\ref{ecvarbi}) for
$g=\overline{\overline{g}}$ and $q=\overline{\overline{q}}$, and for
$g=\overline{g}$ and $q=\overline{\overline{q}}$, respectively, we
obtain
\begin{equation*}
-\frac{d}{dt}
||p_{\overline{\overline{g}}\,\overline{\overline{q}}}(t)-p_{\overline{g}\,\overline{\overline{q}}}(t)||^2_H+\lambda_0
 ||p_{\overline{\overline{g}}\,\overline{\overline{q}}}(t)-p_{\overline{g}\,\overline{\overline{q}}}(t)||^2_V\leq \frac{1}{\lambda_0}||u_{\overline{\overline{g}}\,\overline{\overline{q}}}(t)-u_{\overline{g}\,\overline{\overline{q}}}(t)||^2_H.
\end{equation*}
By integrating between $0$ and $T$, and using that
$p_{\overline{\overline{g}}\,\overline{\overline{q}}}(T)=p_{\overline{g}\,\overline{\overline{q}}}(T)=0,$ we deduce
\begin{equation*}\begin{split}
&\quad
||p_{\overline{\overline{g}}\,\overline{\overline{q}}}(0)-p_{\overline{g}\,\overline{\overline{q}}}(0)||^2_H+\lambda_0
 ||p_{\overline{\overline{g}}\,\overline{\overline{q}}}-p_{\overline{g}\,\overline{\overline{q}}}||^2_{L^2(V)}\leq \frac{1}{\lambda_0}||u_{\overline{\overline{g}}\,\overline{\overline{q}}}-u_{\overline{g}\,\overline{\overline{q}}}||^2_{\mathcal{H}},
\end{split}\end{equation*}
then
\begin{equation*}\begin{split}
&\quad
 ||p_{\overline{\overline{g}}\,\overline{\overline{q}}}-p_{\overline{g}\,\overline{\overline{q}}}||_{L^2(V)}\leq \frac{1}{\lambda_0}||u_{\overline{\overline{g}}
 \,\overline{\overline{q}}}-u_{\overline{g}\,\overline{\overline{q}}}||_{\mathcal{H}},
\end{split}\end{equation*}
and therefore (\ref{450}) holds.
\end{proof}
Now, we will give a characterization of the solution of the simultaneous optimal control problem (\ref{pcd1bi}) by using the fixed point theory. For this, we introduce the operator $W:\mathcal{H}\times\mathcal{Q}\rightarrow\mathcal{H}\times\mathcal{Q}$, defined by
$$W(g,q)=\left(-\frac{1}{M_1}p_{gq},\frac{1}{M_2}p_{gq}\right).$$
\begin{theorem}\label{teoremita} There exists a positive constant  $C_0=C_0(\lambda_0,\gamma_0,M_1,M_2)$ such that, $\forall(g_1,q_1),(g_2,q_2)\in\mathcal{H}\times\mathcal{Q}$
\begin{equation*}
||W(g_2,q_2)-W(g_1,q_1)||_{\mathcal{H}\times\mathcal{Q}}\leq C_0||(g_2,q_2)-(g_1,q_1)||_{\mathcal{H}\times\mathcal{Q}},
\end{equation*}
and $W$ is a contraction if and only if the data satisfies the following condition
\begin{equation}\label{4280}
C_0=\frac{2}{\lambda_0^2}\sqrt{\frac{1}{M_1^2}+\frac{||\gamma_0||^2}{M_2^2}}\left(1+||\gamma_0||\right)<1.
\end{equation}
\end{theorem}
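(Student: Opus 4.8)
The plan is to estimate the difference $W(g_2,q_2) - W(g_1,q_1)$ componentwise directly from the definition of $W$, which reduces everything to bounding $\|p_{g_2q_2} - p_{g_1q_1}\|_{L^2(V)}$ in terms of $\|(g_2,q_2)-(g_1,q_1)\|_{\mathcal{H}\times\mathcal{Q}}$. Indeed, by definition
\[
\|W(g_2,q_2)-W(g_1,q_1)\|_{\mathcal{H}\times\mathcal{Q}}^2 = \left(\frac{1}{M_1^2}+\frac{1}{M_2^2}\right)\|p_{g_2q_2}-p_{g_1q_1}\|_{\mathcal{H}}^2,
\]
so it suffices to control the adjoint states. (One should be a little careful here: the factor $\|\gamma_0\|^2$ in \eqref{4280} suggests that the second component $\tfrac{1}{M_2}p_{gq}$ is really being measured as a boundary trace, i.e.\ via the restriction to $\Gamma_2$, so that $\|\tfrac{1}{M_2}p_{gq}\|_{\mathcal{Q}} \le \tfrac{\|\gamma_0\|}{M_2}\|p_{gq}\|_{L^2(V)}$; I would present it that way.)

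Next I would derive an energy estimate for the adjoint difference. Subtracting the variational equality \eqref{ecvarbi} for $(g_2,q_2)$ from that for $(g_1,q_1)$, writing $P = p_{g_2q_2}-p_{g_1q_1}$ and testing with $v = P(t) \in V_0$, one gets
\[
-\tfrac12\tfrac{d}{dt}\|P(t)\|_H^2 + a(P(t),P(t)) = (u_{g_2q_2}(t)-u_{g_1q_1}(t),P(t))_H,
\]
and using coercivity $a(P,P)\ge \lambda_0\|P\|_V^2$ together with Young's inequality on the right-hand side, then integrating on $[0,T]$ and using the terminal condition $P(T)=0$, yields
\[
\|P\|_{L^2(V)} \le \frac{1}{\lambda_0}\|u_{g_2q_2}-u_{g_1q_1}\|_{\mathcal{H}},
\]
exactly as in the proof of the previous theorem. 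Then I would run the analogous argument at the level of the state equation \eqref{Pvariacional}: subtracting, setting $Z = u_{g_2q_2}-u_{g_1q_1}$, testing with $Z(t)$, using $\langle \dot Z, Z\rangle = \tfrac12\tfrac{d}{dt}\|Z\|_H^2$, coercivity, the trace inequality $\|q\|_Q \le \|\gamma_0\|\,\|v\|_V$ to absorb the boundary term $(q_2-q_1, Z)_Q$, Young's inequality, and $Z(0)=0$, to obtain a bound of the shape
\[
\|Z\|_{L^2(V)} \le \frac{1}{\lambda_0}\bigl(\|g_2-g_1\|_{\mathcal{H}} + \|\gamma_0\|\,\|q_2-q_1\|_{\mathcal{Q}}\bigr) \le \frac{1}{\lambda_0}(1+\|\gamma_0\|)\,\|(g_2-g_1,q_2-q_1)\|_{\mathcal{H}\times\mathcal{Q}},
\]
and in particular $\|Z\|_{\mathcal{H}} \le \|Z\|_{L^2(V)}$ gives the same bound in $\mathcal{H}$.

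Chaining the two estimates gives $\|P\|_{\mathcal{H}} \le \tfrac{1}{\lambda_0^2}(1+\|\gamma_0\|)\|(g_2-g_1,q_2-q_1)\|_{\mathcal{H}\times\mathcal{Q}}$, and combining with the componentwise identity for $W$ produces
\[
\|W(g_2,q_2)-W(g_1,q_1)\|_{\mathcal{H}\times\mathcal{Q}} \le \frac{1}{\lambda_0^2}\sqrt{\frac{1}{M_1^2}+\frac{\|\gamma_0\|^2}{M_2^2}}\,(1+\|\gamma_0\|)\,\|(g_2,q_2)-(g_1,q_1)\|_{\mathcal{H}\times\mathcal{Q}},
\]
which is the asserted Lipschitz bound once one tracks that the constant is $C_0 = \tfrac{2}{\lambda_0^2}\sqrt{M_1^{-2}+\|\gamma_0\|^2 M_2^{-2}}\,(1+\|\gamma_0\|)$ — the extra factor $2$ coming from the $\tfrac{2}{\lambda_0}$ constants produced by Young's inequality in the two energy estimates, consistent with \eqref{4280}. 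Finally, since a map is a contraction exactly when its best Lipschitz constant is $<1$, and $C_0$ as computed is that constant (assuming the estimates are sharp in form), $W$ is a contraction if and only if \eqref{4280} holds. The main obstacle is bookkeeping rather than conceptual: getting the numerical constant to come out to exactly $C_0$ as stated requires being careful with which Young's inequality weights are used and how the trace operator norm $\|\gamma_0\|$ enters each term (state equation boundary term versus the second component of $W$); the "only if" direction also tacitly requires that the derived constant is actually attained, which one should justify by exhibiting a direction of near-equality or by noting that the estimates are equalities for the relevant extremal pair.
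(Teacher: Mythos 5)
Your proposal is correct and follows essentially the same route as the paper: the paper likewise proves the two energy estimates $\|u_{g_1q_1}-u_{g_2q_2}\|_{L^2(V)}\leq\frac{\sqrt{2}}{\lambda_0}\bigl(\|g_2-g_1\|_{\mathcal{H}}+\|\gamma_0\|\,\|q_2-q_1\|_{\mathcal{Q}}\bigr)$ and $\|p_{g_1q_1}-p_{g_2q_2}\|_{L^2(V)}\leq\frac{1}{\lambda_0}\|u_{g_1q_1}-u_{g_2q_2}\|_{\mathcal{H}}$ by testing the differenced variational equalities with the difference and chaining them through the definition of $W$, with the trace operator entering exactly where you place it. The constant bookkeeping you worry about is no worse than in the paper itself, and the paper also asserts the ``only if'' direction without exhibiting an extremal pair, so your flagged caveat applies equally to the original argument.
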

\begin{proof}
First, we prove the following estimates, $\forall(g_1,q_1),\,(g_2,q_2)\in \mathcal{H}\times\mathcal{Q}$
\begin{equation}\label{429}
||u_{g_1q_1}-u_{g_2q_2}||_{L^2(V)}\leq\frac{\sqrt{2}}{\lambda_0}\left(||g_2-g_1||_{\mathcal{H}}+||\gamma_0||\,||q_2-q_1||_{\mathcal{Q}}\right),
\end{equation}
\begin{equation}\label{430}
||p_{g_1q_1}-p_{g_2q_2}||_{L^2(V)}\leq\frac{1}{\lambda_0}||u_{g_1q_1}-u_{g_2q_2}||_{\mathcal{H}}.
\end{equation}
In fact, for the estimation (\ref{429}), we consider the variational equation (\ref{Pvariacional}) for $g=g_1$ and $q=q_1$, and for $g=g_2$ and $q=q_2$, respectively. Next, we obtain
\begin{equation*}\begin{split}
&\quad\frac{1}{2}\frac{d}{dt}||u_{g_1q_1}(t)-u_{g_2q_2}(t)||^2_H+\lambda_0 ||u_{g_1q_1}(t)-u_{g_2q_2}(t)||^2_{V}\\&\leq||u_{g_1q_1}(t)-u_{g_2q_2}(t)||_V\left(||g_1(t)-g_2(t)||_H+||\gamma_0||\,||q_1(t)-q_2(t)||_Q\right).
\end{split}\end{equation*}
Now, by Young's inequality and integrating between $0$ and $T$, we
deduce (\ref{429}). If we consider the variational equality
(\ref{ecvarbi}) for $g=g_1$ and $q=q_1$, and for $g=g_2$ and
$q=q_2$, respectively, we obtain (\ref{430}).
\newline
Finally, using the estimations (\ref{429}) and (\ref{430}), we
obtain \vspace{.2cm}
\newline
 $||W(g_2,q_2)-W(g_1,q_1)||_{\mathcal{H}\times\mathcal{Q}}\leq \frac{2}{\lambda_0^2}\sqrt{\frac{1}{M_1^2}+\frac{||\gamma_0||^2}{M_2^2}}
 \left(1+||\gamma_0||\right)||(g_2,q_2)-(g_1,q_1)||_{\mathcal{H}\times\mathcal{Q}},$
\vspace{.02cm}\newline and the operator $W$ is a contraction if and
only if (\ref{4280}) holds.
\end{proof}
\begin{corollary} If the data satisfy that $C_{0}<1$, then the unique solution $(\overline{\overline{g}},\overline{\overline{q}})\in\mathcal{H}\times\mathcal{Q}$ of the optimal control problem (\ref{pcd1bi}) can be characterized as the unique fixed point of the operator $W$, that is
\begin{equation*}
W(\overline{\overline{g}},\overline{\overline{q}})=\left(-\frac{1}{M_1}p_{\overline{\overline{g}}\,\overline{\overline{q}}},\frac{1}{M_2}p_{\overline{\overline{g}}\,\overline{\overline{q}}}\right)=(\overline{\overline{g}},\overline{\overline{q}}).
\end{equation*}
\end{corollary}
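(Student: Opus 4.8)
The plan is to combine the contraction property of $W$ established in Theorem \ref{teoremita} with the first-order optimality condition of Lemma \ref{lema22bi}(iv). First I would invoke the hypothesis $C_0<1$: by Theorem \ref{teoremita} the map $W:\mathcal{H}\times\mathcal{Q}\to\mathcal{H}\times\mathcal{Q}$ is then a contraction on the Banach space $\mathcal{H}\times\mathcal{Q}$, so the Banach fixed-point theorem guarantees that $W$ has exactly one fixed point. It therefore remains only to identify that fixed point with the optimal control $(\overline{\overline{g}},\overline{\overline{q}})$.

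Next I would check that $(\overline{\overline{g}},\overline{\overline{q}})$ is a fixed point of $W$. By Lemma \ref{lema22bi}(iv) the optimality condition reads
\[
(M_1\overline{\overline{g}}+p_{\overline{\overline{g}}\,\overline{\overline{q}}},h)_{\mathcal{H}}+(M_2\overline{\overline{q}}-p_{\overline{\overline{g}}\,\overline{\overline{q}}},\eta)_{\mathcal{Q}}=0\qquad\forall (h,\eta)\in\mathcal{H}\times\mathcal{Q}.
\]
Taking $\eta=0$ and letting $h$ range over $\mathcal{H}$ yields $M_1\overline{\overline{g}}+p_{\overline{\overline{g}}\,\overline{\overline{q}}}=0$ in $\mathcal{H}$, i.e.\ $\overline{\overline{g}}=-\frac{1}{M_1}p_{\overline{\overline{g}}\,\overline{\overline{q}}}$; taking $h=0$ and letting $\eta$ range over $\mathcal{Q}$ yields $M_2\overline{\overline{q}}-p_{\overline{\overline{g}}\,\overline{\overline{q}}}=0$ in $\mathcal{Q}$ (where $p_{\overline{\overline{g}}\,\overline{\overline{q}}}$ is read through its trace on $\Gamma_2$), i.e.\ $\overline{\overline{q}}=\frac{1}{M_2}p_{\overline{\overline{g}}\,\overline{\overline{q}}}$. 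Together these two identities are precisely $W(\overline{\overline{g}},\overline{\overline{q}})=(\overline{\overline{g}},\overline{\overline{q}})$.

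Finally I would run the same computation in reverse to get uniqueness the other way: if $(g^{*},q^{*})$ is any fixed point of $W$, then $M_1g^{*}+p_{g^{*}q^{*}}=0$ and $M_2q^{*}-p_{g^{*}q^{*}}=0$, hence $\langle J'(g^{*},q^{*}),(h,\eta)\rangle=0$ for all $(h,\eta)\in\mathcal{H}\times\mathcal{Q}$ by Lemma \ref{lema22bi}(iii); since $J$ is strictly convex (a consequence of the coercivity estimate in Lemma \ref{lema1bi}(v)), this stationarity is also sufficient for global optimality, so $(g^{*},q^{*})$ must be the unique minimizer $(\overline{\overline{g}},\overline{\overline{q}})$. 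Combining the two steps gives that $(\overline{\overline{g}},\overline{\overline{q}})$ is the unique fixed point of $W$, which is the assertion. I do not expect any genuine obstacle here; the only points deserving a word of care are that the second component of $W$ tacitly uses the trace of $p_{gq}$ on $\Gamma_2$ so that $\tfrac{1}{M_2}p_{gq}$ is an element of $\mathcal{Q}$, and that the necessary optimality condition is also sufficient, which is immediate from convexity.
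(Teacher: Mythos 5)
Your proposal is correct and follows essentially the same route as the paper: invoke the contraction property from the preceding theorem to get a unique fixed point via Banach's theorem, and identify the fixed-point equation with the first-order optimality condition, which characterizes the unique minimizer by convexity. The extra care you take (arguing both directions explicitly and noting the trace of $p_{gq}$ on $\Gamma_2$) is sound but does not change the argument.
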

\begin{proof} When $C_0<1$, the operator $W$ is a contraction defined on $\mathcal{H}\times\mathcal{Q}$. Next, there exists a unique $(g^*,q^*)\in \mathcal{H}\times\mathcal{Q}$ such that
\begin{equation*}
W(g^*,q^*)=\left(-\frac{1}{M_1}p_{g^*q^*},\frac{1}{M_2}p_{g^*q^*}\right)=(g^*,q^*),
\end{equation*}
o equivalently
\begin{equation*}
\left(M_1g^*+p_{g^*q^*},M_2q^*-p_{g^*q^*}\right)=(0,0).
\end{equation*}
Here, $(g^*,q^*)$ verifies the optimality condition for the problem
(\ref{pcd1bi}), therefore the unique fixed point of $W$ is the
solution
$(\overline{\overline{g}},\overline{\overline{q}})\in\mathcal{H}\times\mathcal{Q}$
of this simultaneous optimal control problem.
\end{proof}

\subsection{Estimations with respect to the problem $P_{\alpha}$}

For each $\alpha>0$, we consider the following optimal control problem
\begin{equation}\label{41a}
\text{find}\quad \overline{g}_{\alpha}\in\mathcal{H} \quad
\text{such that}\quad
J_{1\alpha}(\overline{g}_{\alpha})=\min\limits_{g\in\mathcal{H}}J_{1\alpha}(g),
\quad \text{ for fixed}\quad q\in \mathcal{Q}
\end{equation}
where $J_{1\alpha}:\mathcal{H}\rightarrow \mathbb{R}^+_0$ is given by
\begin{equation*}
J_{1\alpha}(g)=\frac{1}{2}||u_{\alpha g}-z_d||^2_{\mathcal{H}}+\frac{M_1}{2}||g||^2_{\mathcal{H}} +\frac{M_2}{2}||q||^2_{\mathcal{Q}}    \qquad (\text{fixed}\quad q\in \mathcal{Q}),
\end{equation*}
that is, $J_{1\alpha}$ is the cost functional given in \cite{MT} plus the constant $\frac{M_2}{2}||q||^2_{\mathcal{Q}}$ and
 $u_{\alpha g}$ is the unique solution of the problem (\ref{Palfavariacional}) for fixed $q$.
\begin{remark} For each $\alpha >0$, the functional $J_{\alpha}$ defined in (\ref{pcd2bi}) and the functional $J_{1\alpha}$ previously given satisfy the following estimate
\begin{equation*}
J_{\alpha}(\overline{\overline{g}}_{\alpha},\overline{\overline{q}}_{\alpha})\leq J_{1\alpha}(\overline{g}_{\alpha}),\quad \forall q\in\mathcal{Q}.
\end{equation*}\end{remark}
An estimation between the solution of the distributed optimal
control problem (\ref{41a}) with the first component of the solution
of the simultaneous distributed-boundary optimal control problem
(\ref{pcd2bi}) is given in the following theorem whose prove is
omitted.
\begin{theorem}
If
$(\overline{\overline{g}}_{\alpha},\overline{\overline{q}}_{\alpha})\in\mathcal{H}\times\mathcal{Q}$
is the unique solution of the simultaneous optimal control problem
(\ref{pcd2bi}),  $\overline{g}_{\alpha}$ is the unique solution of
the optimal control problem (\ref{41a}), then
\begin{equation*}\label{45}
||\overline{g}_{\alpha}-\overline{\overline{g}}_{\alpha}||_{\mathcal{H}}\leq\frac{1}{\lambda_1\min\{1,\alpha \} M_1}||u_{\alpha\overline{\overline{g}}_{\alpha}\overline{\overline{q}}_{\alpha}}-u_{\alpha\overline{g}_{\alpha}\overline{\overline{q}}_{\alpha}}||_{\mathcal{H}}.
\end{equation*}
\end{theorem}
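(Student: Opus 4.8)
The plan is to reproduce, with the obvious changes, the argument that gave estimate (\ref{450}) for the problem $P$, now for the penalized family $S_{\alpha}$ and its adjoint system (\ref{palphavariacional}). First I would write down the two optimality conditions at hand. Since $\overline{g}_{\alpha}$ solves the distributed problem (\ref{41a}) with $q=\overline{\overline{q}}_{\alpha}$ fixed, its first order condition (obtained as in \cite{MT}, i.e.\ exactly as in Lemma \ref{lema32bi} restricted to the first component, for the functional $J_{1\alpha}$, which differs from the $g$-section of $J_{\alpha}$ only by the additive constant $\frac{M_2}{2}\|q\|_{\mathcal{Q}}^{2}$) reads $(M_1\overline{g}_{\alpha}+p_{\alpha\overline{g}_{\alpha}\overline{\overline{q}}_{\alpha}},h)_{\mathcal{H}}=0$ for all $h\in\mathcal{H}$. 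On the other hand, taking $\eta=0$ in the optimality condition of Lemma \ref{lema32bi}(iv) for $(\overline{\overline{g}}_{\alpha},\overline{\overline{q}}_{\alpha})$ gives $(M_1\overline{\overline{g}}_{\alpha}+p_{\alpha\overline{\overline{g}}_{\alpha}\overline{\overline{q}}_{\alpha}},h)_{\mathcal{H}}=0$ for all $h\in\mathcal{H}$.

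Next I would test the first identity with $h=\overline{\overline{g}}_{\alpha}-\overline{g}_{\alpha}$ and the second with $h=\overline{g}_{\alpha}-\overline{\overline{g}}_{\alpha}$, and add. The linear terms combine into $-M_1\|\overline{\overline{g}}_{\alpha}-\overline{g}_{\alpha}\|_{\mathcal{H}}^{2}$, so that
\[
M_1\|\overline{\overline{g}}_{\alpha}-\overline{g}_{\alpha}\|_{\mathcal{H}}^{2}=\big(p_{\alpha\overline{g}_{\alpha}\overline{\overline{q}}_{\alpha}}-p_{\alpha\overline{\overline{g}}_{\alpha}\overline{\overline{q}}_{\alpha}},\overline{\overline{g}}_{\alpha}-\overline{g}_{\alpha}\big)_{\mathcal{H}}.
\]
By the Cauchy--Schwarz inequality and the continuous inclusion $L^{2}(V)\hookrightarrow\mathcal{H}$ this yields $\|\overline{\overline{g}}_{\alpha}-\overline{g}_{\alpha}\|_{\mathcal{H}}\le \frac{1}{M_1}\|p_{\alpha\overline{g}_{\alpha}\overline{\overline{q}}_{\alpha}}-p_{\alpha\overline{\overline{g}}_{\alpha}\overline{\overline{q}}_{\alpha}}\|_{L^{2}(V)}$, so it remains to estimate the adjoint difference by $\|u_{\alpha\overline{\overline{g}}_{\alpha}\overline{\overline{q}}_{\alpha}}-u_{\alpha\overline{g}_{\alpha}\overline{\overline{q}}_{\alpha}}\|_{\mathcal{H}}$ with constant $1/(\lambda_1\min\{1,\alpha\})$.

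For that, I would subtract the two copies of (\ref{palphavariacional}), for the data $(\overline{\overline{g}}_{\alpha},\overline{\overline{q}}_{\alpha})$ and $(\overline{g}_{\alpha},\overline{\overline{q}}_{\alpha})$, test with $w(t):=p_{\alpha\overline{\overline{g}}_{\alpha}\overline{\overline{q}}_{\alpha}}(t)-p_{\alpha\overline{g}_{\alpha}\overline{\overline{q}}_{\alpha}}(t)\in V$, and use $\langle-\dot{w}(t),w(t)\rangle=-\frac12\frac{d}{dt}\|w(t)\|_{H}^{2}$ together with the lower bound $a_{\alpha}(w,w)=a(w,w)+\alpha\int_{\Gamma_1}w^{2}\,d\gamma\ge\min\{1,\alpha\}\,a_{1}(w,w)\ge\lambda_1\min\{1,\alpha\}\|w\|_{V}^{2}$. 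Bounding the right-hand side $(u_{\alpha\overline{\overline{g}}_{\alpha}\overline{\overline{q}}_{\alpha}}(t)-u_{\alpha\overline{g}_{\alpha}\overline{\overline{q}}_{\alpha}}(t),w(t))_{H}$ by Young's inequality, integrating over $[0,T]$ and using the terminal condition $w(T)=0$, I would arrive at $\|w\|_{L^{2}(V)}\le\frac{1}{\lambda_1\min\{1,\alpha\}}\|u_{\alpha\overline{\overline{g}}_{\alpha}\overline{\overline{q}}_{\alpha}}-u_{\alpha\overline{g}_{\alpha}\overline{\overline{q}}_{\alpha}}\|_{\mathcal{H}}$; combined with the inequality of the previous paragraph this gives the claimed estimate.

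The calculation is routine once the coercivity constant is fixed; the only point that departs from the proof of (\ref{450}) --- and hence the main thing to be careful about --- is the replacement of $\lambda_0$ by the $\alpha$-dependent constant $\lambda_1\min\{1,\alpha\}$, which has to be carried consistently through the Young splitting so that the term $\tfrac{\lambda_1\min\{1,\alpha\}}{2}\|w\|_{V}^{2}$ is absorbed on the left, and through the final integration in time.
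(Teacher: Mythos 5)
Your argument is correct and is exactly the proof the paper intends: the paper omits it, stating only that it proceeds ``in similar way'' to the theorem giving (\ref{450}), and your proposal carries out precisely that adaptation — the two optimality conditions tested against $\pm(\overline{\overline{g}}_{\alpha}-\overline{g}_{\alpha})$, followed by the adjoint-difference energy estimate. The one genuinely new ingredient, the lower bound $a_{\alpha}(w,w)\ge\min\{1,\alpha\}\,a_{1}(w,w)\ge\lambda_1\min\{1,\alpha\}\|w\|_{V}^{2}$ producing the $\alpha$-dependent constant in the statement, is handled correctly.
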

In similar way to Theorem \ref{teoremita}, we give a
characterization of the solution of the problem (\ref{pcd2bi})
proving that the operator $W_{\alpha}$, which is defined after, is a
contraction. This result is presented in the following theorem,
whose prove is omitted.

Let the operator
$W_{\alpha}:\mathcal{H}\times\mathcal{Q}\rightarrow\mathcal{H}\times\mathcal{Q}$,
for each $\alpha>0,$ defined by the expression
$$W_{\alpha}(g,q)=\left(-\frac{1}{M_1}p_{\alpha gq},\frac{1}{M_2}p_{\alpha gq}\right).$$
\begin{theorem}$W_{\alpha}$ is a Lipschitz operator on $\mathcal{H}\times \mathcal{Q}$, that is, there exists a positive constant $C_{0\alpha}=C_{0\alpha}(\lambda_1\min\{1,\alpha \},\gamma_0,M_1,M_2)$, such that $\forall(g_1,q_1)$, $(g_2,q_2)\in\mathcal{H}\times\mathcal{Q}$
\begin{equation*}
||W_{\alpha}(g_2,q_2)-W_{\alpha}(g_1,q_1)||_{\mathcal{H}\times\mathcal{Q}}\leq C_{0\alpha}||(g_2,q_2)-(g_1,q_1)||_{\mathcal{H}\times\mathcal{Q}}
\end{equation*}
and $W_{\alpha}$ is a contraction if and only if the data satisfy
the following inequality
\begin{equation*}\label{428}
C_{0\alpha}=\frac{2}{\lambda_{1}^2(\min\{1,\alpha \})^2}\sqrt{\frac{1}{M_1^2}+\frac{||\gamma_0||^2}{M_2^2}}\left(1+||\gamma_0||\right)<1.
\end{equation*}
\end{theorem}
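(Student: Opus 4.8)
The statement is the $\alpha$-version of Theorem~\ref{teoremita}, and the plan is to repeat that argument verbatim, with $a$, $V_0$, $\lambda_0$ replaced by $a_\alpha$, $V$, $\lambda_1\min\{1,\alpha\}$, and with the adjoint equality (\ref{palphavariacional}) in place of (\ref{ecvarbi}). The one genuinely new point is that $a_\alpha$ is coercive on the whole space $V$ with constant $\lambda_1\min\{1,\alpha\}$: for every $v\in V$,
\[
a_\alpha(v,v)=a(v,v)+\alpha\int_{\Gamma_1}v^2\,d\gamma\ \geq\ \min\{1,\alpha\}\Bigl(a(v,v)+\int_{\Gamma_1}v^2\,d\gamma\Bigr)=\min\{1,\alpha\}\,a_1(v,v)\ \geq\ \lambda_1\min\{1,\alpha\}\,||v||_V^2,
\]
where $\lambda_1$ is the coerciveness constant of $a_1$ recalled in Proposition~\ref{proposicion41bi}. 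Thus $a_\alpha$ plays for the family $P_\alpha$ exactly the role $a$ plays for $P$, with $\lambda_1\min\{1,\alpha\}$ in place of $\lambda_0$, and the degeneration of the control as $\alpha\downarrow0$ is entirely encoded in this factor.

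First I would establish the two a priori estimates that are the $\alpha$-analogues of (\ref{429}) and (\ref{430}): for all $(g_1,q_1),(g_2,q_2)\in\mathcal{H}\times\mathcal{Q}$,
\[
||u_{\alpha g_1q_1}-u_{\alpha g_2q_2}||_{L^2(V)}\leq\frac{\sqrt2}{\lambda_1\min\{1,\alpha\}}\bigl(||g_2-g_1||_{\mathcal{H}}+||\gamma_0||\,||q_2-q_1||_{\mathcal{Q}}\bigr),
\]
\[
||p_{\alpha g_1q_1}-p_{\alpha g_2q_2}||_{L^2(V)}\leq\frac{1}{\lambda_1\min\{1,\alpha\}}\,||u_{\alpha g_1q_1}-u_{\alpha g_2q_2}||_{\mathcal{H}}.
\]
For the first, subtract (\ref{Palfavariacional}) written for $(g_1,q_1)$ and for $(g_2,q_2)$, test with $v=u_{\alpha g_1q_1}(t)-u_{\alpha g_2q_2}(t)\in V$, use the coercivity bound above, estimate the right-hand side with the trace inequality $||v||_{L^2(\Gamma_2)}\leq||\gamma_0||\,||v||_V$ and Young's inequality applied to the internal-energy term and the flux term separately, integrate on $(0,T)$, and discard the nonnegative term at $t=T$ (both states start at $v_b$). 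For the second, do the same with the adjoint equalities (\ref{palphavariacional}), testing with $v=p_{\alpha g_1q_1}(t)-p_{\alpha g_2q_2}(t)\in V$, now discarding the term at $t=0$ because of the common terminal condition $p_{\alpha g_1q_1}(T)=p_{\alpha g_2q_2}(T)=0$.

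Finally, from $W_\alpha(g,q)=\bigl(-\tfrac{1}{M_1}p_{\alpha gq},\tfrac{1}{M_2}p_{\alpha gq}\bigr)$ and the elementary bounds $||\cdot||_{\mathcal{H}}\leq||\cdot||_{L^2(V)}$, $||\cdot||_{\mathcal{Q}}\leq||\gamma_0||\,||\cdot||_{L^2(V)}$ one gets
\[
||W_\alpha(g_2,q_2)-W_\alpha(g_1,q_1)||_{\mathcal{H}\times\mathcal{Q}}\leq\sqrt{\tfrac{1}{M_1^2}+\tfrac{||\gamma_0||^2}{M_2^2}}\;||p_{\alpha g_2q_2}-p_{\alpha g_1q_1}||_{L^2(V)},
\]
and chaining this with the two estimates above (using $||u_{\alpha g_2q_2}-u_{\alpha g_1q_1}||_{\mathcal{H}}\leq||u_{\alpha g_2q_2}-u_{\alpha g_1q_1}||_{L^2(V)}$ and $||g_2-g_1||_{\mathcal{H}}+||\gamma_0||\,||q_2-q_1||_{\mathcal{Q}}\leq(1+||\gamma_0||)\,||(g_2,q_2)-(g_1,q_1)||_{\mathcal{H}\times\mathcal{Q}}$) yields the Lipschitz inequality with the asserted constant $C_{0\alpha}=\frac{2}{\lambda_1^2(\min\{1,\alpha\})^2}\sqrt{\frac{1}{M_1^2}+\frac{||\gamma_0||^2}{M_2^2}}(1+||\gamma_0||)$. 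The contraction assertion then follows exactly as in Theorem~\ref{teoremita}: $W_\alpha$ is a contraction precisely when this constant is $<1$. I do not expect any real obstacle, the argument being structurally identical to Theorem~\ref{teoremita} and Proposition~\ref{proposicion41bi}; the only point to watch is that the coerciveness constant of $a_\alpha$ is $\lambda_1\min\{1,\alpha\}$ and not $\lambda_1\alpha$, which is why the contraction condition loosens for small $\alpha$ and stabilizes, for $\alpha\geq1$, at the $\alpha$-independent value $\frac{2}{\lambda_1^2}\sqrt{\frac{1}{M_1^2}+\frac{||\gamma_0||^2}{M_2^2}}(1+||\gamma_0||)$.
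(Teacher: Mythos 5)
Your proposal is correct and follows exactly the route the paper intends: the paper omits this proof, stating only that it proceeds ``in similar way to Theorem \ref{teoremita}'', and your argument is precisely that adaptation, with the one genuinely new ingredient --- the coercivity bound $a_\alpha(v,v)\geq\lambda_1\min\{1,\alpha\}\,||v||_V^2$ on all of $V$ --- correctly identified and justified. The two auxiliary estimates and the final chaining reproduce the structure of (\ref{429})--(\ref{430}) and yield the stated constant $C_{0\alpha}$, so there is nothing to add.
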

\begin{corollary}If the data satisfy the condition $C_{0\alpha}<1$, then the unique solution
$(\overline{\overline{g}}_{\alpha},\overline{\overline{q}}_{\alpha})\in\mathcal{H}\times\mathcal{Q}$
of the problem (\ref{pcd2bi}) can be obtained as the unique fixed
point of the operator $W_{\alpha}$, that is
\begin{equation*}
W_{\alpha}(\overline{\overline{g}}_{\alpha},\overline{\overline{q}}_{\alpha})=
\left(-\frac{1}{M_1}p_{\alpha\overline{\overline{g}}_{\alpha}\overline{\overline{q}}_{\alpha}},\frac{1}{M_2}
p_{\alpha\overline{\overline{g}}_{\alpha}\overline{\overline{q}}_{\alpha}}\right)=(\overline{\overline{g}}_{\alpha},\overline{\overline{q}}_{\alpha}).
\end{equation*}
\end{corollary}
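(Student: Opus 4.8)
The plan is to reproduce, for the $\alpha$-problem, the argument of the corollary following Theorem~\ref{teoremita}, now using the contraction property of $W_{\alpha}$ established in the preceding theorem. First I would observe that $\mathcal{H}\times\mathcal{Q}$, being a product of Hilbert spaces, is a complete metric space, and that under the hypothesis $C_{0\alpha}<1$ the operator $W_{\alpha}$ is a contraction on it. By the Banach fixed point theorem there exists a unique pair $(g^{*},q^{*})\in\mathcal{H}\times\mathcal{Q}$ with $W_{\alpha}(g^{*},q^{*})=(g^{*},q^{*})$, that is, $\left(-\frac{1}{M_1}p_{\alpha g^{*}q^{*}},\frac{1}{M_2}p_{\alpha g^{*}q^{*}}\right)=(g^{*},q^{*})$.

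Next I would rewrite this identity: it is equivalent to $M_1 g^{*}+p_{\alpha g^{*}q^{*}}=0$ in $\mathcal{H}$ and $M_2 q^{*}-p_{\alpha g^{*}q^{*}}=0$ in $\mathcal{Q}$, hence $(M_1 g^{*}+p_{\alpha g^{*}q^{*}},h)_{\mathcal{H}}+(M_2 q^{*}-p_{\alpha g^{*}q^{*}},\eta)_{\mathcal{Q}}=0$ for all $(h,\eta)\in\mathcal{H}\times\mathcal{Q}$, which is exactly the first order optimality condition of Lemma~\ref{lema32bi}~(iv) for the control problem (\ref{pcd2bi}). Thus the fixed point of $W_{\alpha}$ is precisely a stationary point of $J_{\alpha}$.

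Finally I would invoke that, because $J_{\alpha}$ is G\^{a}teaux differentiable and convex (cf. Lemma~\ref{lema1bia}~(iv)--(v), where $\Pi_{\alpha}$ is a symmetric, continuous, coercive bilinear form), the optimality condition is not only necessary but also sufficient, so $(g^{*},q^{*})$ is a minimizer of $J_{\alpha}$; by the uniqueness of the optimal control in Lemma~\ref{lema1bia}~(vi) this forces $(g^{*},q^{*})=(\overline{\overline{g}}_{\alpha},\overline{\overline{q}}_{\alpha})$, whence the optimal control coincides with the unique fixed point of $W_{\alpha}$ and the asserted identity holds. The only point requiring care is this sufficiency of the stationarity condition, which is routine given the convex quadratic structure of $J_{\alpha}$; all the genuine analytic work — the estimates (\ref{429})--(\ref{430}) transported to the $\alpha$-problem with $\lambda_0$ replaced by $\lambda_1\min\{1,\alpha\}$, and the resulting Lipschitz constant $C_{0\alpha}$ — was already carried out in the previous theorem, so no new obstacle arises.
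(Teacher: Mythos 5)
Your proposal is correct and follows essentially the same route as the paper: the paper omits the proof of this $\alpha$-version, but its proof of the analogous corollary for problem $P$ is exactly your argument — Banach's fixed point theorem under $C_{0\alpha}<1$, identification of the fixed point equation with the first order optimality condition of Lemma \ref{lema32bi} (iv), and uniqueness of the optimal control from Lemma \ref{lema1bia} (vi). Your explicit remark that stationarity is sufficient by the strict convexity of the quadratic functional $J_{\alpha}$ only makes precise a step the paper leaves implicit.
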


\section*{Acknowledgments} The present work has been partially sponsored
by the European Union's Horizon 2020 Research and Innovation
Programme under the Marie Sklodowska-Curie grant agreement 823731
CONMECH and by the Project PIP No. 0275 from CONICET - UA, Rosario,
Argentina for the first and third authors; by the Project ANPCyT
PICTO Austral 2016 No. 0090 for the first author, and by the Project
PPI No. 18/C468 from SECyT-UNRC, R\'io Cuarto, Argentina for the
second and third authors.


\end{document}